\newcommand{\supp}{\operatorname{supp}}
\newtheorem{teo}{Theorem}[section]
\newtheorem{lemma}[teo]{Lemma}
\newtheorem{prop}[teo]{Proposition}
\newtheorem{coro}[teo]{Corollary}
\theoremstyle{definition}
\def\R{\mathcal{R}}
\newcommand{\RR}{\mathbb{R}}
\def\RR{\mathbb{R}}
\def\R{\mathcal{R}}
\theoremstyle{definition}
\newtheorem{definition}[teo]{Definition}
\def\R{{\mathbb R}}
\def\Z{{\mathbb Z}}
\def\cF{{\mathcal F}}
\def\cB{{\mathcal B}}
\def\cL{{\mathcal L}}
\def\Homeo{\operatorname{Homeo}}
\begin{document}

\title[Simplicity of homeomorphism groups]{On the simplicity of  homeomorphism groups of a tilable lamination}
\author{Jos\'e Aliste-Prieto and Samuel Petite}


\thanks{J. Aliste-Prieto acknowledges financial support from Fondecyt Iniciaci\'on 11121510 and Anillo DySyRf ACT-1103. S. Petite acknowledges financial support from the ANR SUBTILE 0879. This  work is part of the program MathAmSud  DYSTIL 12Math-02} 

\subjclass{57S05, 37C85 } \keywords{leaf preserving homeomorphisms, tilable lamination.}

\maketitle
\begin{abstract} We show that the identity component of the  group of homeomorphisms that preserve all leaves of a  ${\mathbb R}^{d}$- tilable lamination is simple.  Moreover, in the one dimensional case, we show that this group is uniformly perfect.  We obtain a similar result for a  dense subgroup  of homeomorphisms.
\end{abstract}

\maketitle \markboth{Jos\'e Aliste-Prieto and Samuel Petite}{ }

\section{Introduction}

In this paper it is shown that the connected component of the identity  of the group $\Homeo_\cL(\Omega)$ 
of all leaf-preserving homeomorphisms of a minimal tilable lamination $\Omega$ in any dimension is a simple group. 
We also prove that this group is equal to the group of homeomorphisms that are isotopic to the identity and that is open in $\Homeo_\cL(\Omega)$. 

Similar results were obtained in the 60's by G. Fisher \cite{F} for the group of all homeomorphisms of a closed topological manifold of dimension smaller or equal than three.
The algebraic simplicity for groups of homeomorphisms and diffeomorphisms 
of manifolds has been widely studied in the literature:  In 1961, R. Anderson \cite{An}, generalizing the work of G. Fisher \cite{F}, showed  the group of stable homeomorphisms of a manifold is simple. Later,   D. Epstein  \cite{E} established  sufficient  conditions  on a group of homeomorphisms,   for  the commutator subgroup to be simple. This means that a group satisfying Epstein's conditions is simple if and only if it is perfect  ({\em i.e.} its commutators subgroup is the whole group). 

It is also worth mentioning the works of M.  Herman \cite{H}, W. Thurston \cite{T} and  J. Mather \cite{Ma} who provided a nearly complete classification 
for the simplicity of diffeomorphism groups on manifolds. 

Given  a  smooth foliation $\cF$ over  a manifold $M$, T. Rybicki \cite{R} and T. Tsuboi \cite{Ts}  studied the simplicity and perfectness of the identity component of the group $G_\cF(M)$ of all leaf preserving  diffeomorphisms of $(M, \cF)$. Notice here that these groups do not satisfy Epstein's conditions.

On the other hand, tilable laminations have been recently introduced as a geometric model for the study of non-periodic tilings \cite{BBG}. They also appear as suspensions of  minimal Cantor $\Z^d$-actions, like  minimal subshifts. In addition, they include some classical laminated spaces as the dyadic solenoid. These  spaces are locally homeomorphic to the product of an open set in $\R^d$ and a Cantor set. In other words, these are laminated spaces with a Cantor transversal. They are also endowed with a natural $\R^d$-action, which we we call the translation flow. Like in the case of foliations of manifolds, groups of homeomorphisms on tilable laminations in general do not satisfy  Epstein's conditions. 


We denote by $D(\Omega)$ the group of {\em deformations}, that is, $D(\Omega)$ is the path connected component of the identity in $\Homeo(\Omega)$ endowed with  the $C^0$-topology. 
 We conjointly consider the following class of homeomorphisms, which arises  naturally in the context of non-periodic tilings, namely the group of homeomorphisms preserving the vertical structure (see the precise definition in Section \ref{sec:preliminaries}) denoted $\Homeo_{vsp}(\Omega)$. Roughly speaking, such homeomorphisms map any small Cantor transversal into a Cantor transversal. Notice that $\Homeo_{vsp}(\Omega)$ is a dense subgroup of $\Homeo_\cL(\Omega)$.  
 Let $D_{vsp}(\Omega)$ denote the path connected component of the identity in $\Homeo_{vsp}(\Omega)$.    Our aim is to show the simplicity of these groups. 

Another motivation for studying these groups comes from topological  orbit equivalence theory: two tilable laminations $\Omega_{1}, \Omega_{2}$ are orbit equivalent if there is a homeomorphism between the spaces mapping any orbit onto an orbit. Because of the totally disconnected transversal structure, $\Omega_{1}$ and $\Omega_{2}$ are orbit equivalent if and only if they are homeomorphic. A difficult result of Rubin \cite{Ru} asserts that the group  $\operatorname{Homeo}(\Omega)$ is a complete invariant of the orbit equivalence class of the lamination $\Omega$: Any algebraic group isomorphism  of these groups is induced by an homeomorphism on the topological laminations. It follows that the group $D(\Omega)$ is an invariant of flow equivalence.

For a topological group $G$, we denote by $G^0$ the connected component of the identity in $G$. 
\begin{teo}\label{teo:main} Let $\Omega$ be a minimal tilable lamination. Let $G$ be either $\Homeo_\cL(\Omega)$ or $\Homeo_{vsp}(\Omega)$. Then,
\begin{enumerate}
\item $\Homeo^0_\cL(\Omega) = D(\Omega)$ and $ \Homeo_{vsp}^0(\Omega) = D_{vsp}(\Omega)$;
\item $G^0$ is open in $G$;
\item $G^0$ is simple.
\end{enumerate}
\end{teo}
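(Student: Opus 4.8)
The plan is to establish the three claims in the order (2), (1), (3); in fact (1) and (2) fall out together from a single local-contractibility statement. First I would show that $D(\Omega)$ (resp. $D_{vsp}(\Omega)$) contains a $C^0$-neighborhood of the identity. Using the local product structure $V\cong U\times C$ of a tilable lamination, with $U$ open in $\R^d$ and $C$ a Cantor transversal, a leaf-preserving homeomorphism $g$ that is $C^0$-close to the identity moves each point only slightly and, being leaf-preserving, within its own leaf; on each chart it is therefore close to a homeomorphism of $U$. I would straighten $g$ to the identity leaf by leaf using an Alexander-type isotopy in the $\R^d$-direction, glued over a finite cover by flow boxes with the help of the translation flow, total disconnectedness of the transversal guaranteeing that the local isotopies match up continuously. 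Thus $D(\Omega)$ is an open subgroup of $G$. An open subgroup of a topological group is clopen, hence contains $G^0$; being path-connected it is also contained in $G^0$. This yields at once (1), $G^0 = D(\Omega)$ (resp. $G^0 = D_{vsp}(\Omega)$), and (2), the openness of $G^0$.

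For (3) I would run an Epstein-type commutator argument tailored to the lamination, resting on two lemmas. The \emph{fragmentation lemma}: every $g \in G^0$ is a finite product of elements each supported in a flow box; using (1), one writes $g$ as the endpoint of an isotopy to the identity, fixes a finite cover of $\Omega$ by flow boxes, and cuts the isotopy along this cover. Writing $G_V \le G^0$ for the subgroup of homeomorphisms supported in a box $V$, the second lemma is the \emph{perfectness} of each $G_V$.

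Granting these, let $N \trianglelefteq G^0$ contain some $f_0 \neq \mathrm{id}$. Pick $x_0$ with $f_0(x_0) \neq x_0$; since $f_0$ is continuous, a small enough flow box $V \ni x_0$ satisfies $f_0(V) \cap V = \emptyset$. For $h,k \in G_V$ the conjugate $f_0 h f_0^{-1}$ is supported in $f_0(V)$, disjoint from $V$, so it commutes with everything supported in $V$; a direct computation then collapses the iterated commutator to $[[f_0,h],k] = [h^{-1},k]$. Hence $N$ contains $[G_V,G_V]$, which equals $G_V$ by perfectness. Finally, by minimality, for every box $V'$ in a sufficiently fine cover there is $\phi \in G^0$ with $\phi(V') \subseteq V$, so $G_{V'} = \phi^{-1} G_{\phi(V')} \phi \subseteq \phi^{-1} G_V \phi \subseteq N$ by normality; since the $G_{V'}$ generate $G^0$ by fragmentation, $N = G^0$.

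The main obstacle is the perfectness of the local groups $G_V$, which cannot be extracted from Epstein's abstract axioms here. Instead I would use the Mather--Anderson ``infinite swindle'': for $h \in G_V$ supported in a sub-box $W$, exploit the $\R^d$-direction of $V \cong U\times C$ to produce $\theta \in G_V$ with the boxes $\theta^n(W)$, $n \ge 0$, pairwise disjoint and escaping toward $\partial U$, so that the infinite product $H = \prod_{n\ge 0}\theta^n h \theta^{-n}$ is a well-defined element of $G_V$; then $H = h\,(\theta H \theta^{-1})$ gives $h = [H,\theta]$, a single commutator. The delicate point, and the place where the concrete geometry of tilable laminations is indispensable, is verifying that $\theta$ and $H$ remain genuine leaf-preserving (resp. vertical-structure-preserving) homeomorphisms across the Cantor transversal. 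In dimension one the interval structure lets one bound the number of fragmentation pieces and commutators uniformly, which is exactly what upgrades perfectness to uniform perfectness and yields the one-dimensional refinement.
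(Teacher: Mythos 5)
Your overall strategy (openness of $D(\Omega)$ via a neighborhood of the identity, fragmentation, local perfectness, and an Epstein/Tsuboi-style commutator collapse) is the right one, and your treatment of item (3) is essentially sound: the double-commutator identity $[[f_0,h],k]=[h^{-1},k]$ is an equivalent form of the Tsuboi lemma the paper uses, and proving \emph{local} perfectness of $G_V$ by the Anderson swindle is a legitimate reorganization of the paper's argument (the paper instead proves global perfectness of $G^0$ by a Le Roux-type argument and then invokes an Epstein-type theorem for the commutator subgroup). The transitivity step ``for every small box $V'$ there is $\phi\in G^0$ with $\phi(V')\subseteq V$'' needs more care than ``by minimality'' --- the paper arranges the cover so that each of its boxes shares a clopen piece of the transversal of $V$ and sits inside a common larger box, and only then builds $\phi$ --- but this is fixable.

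The genuine gap is in your proof of (1), (2) and of the fragmentation lemma. For a homeomorphism $f$ that is merely $C^0$-close to the identity, neither ``straightening leaf by leaf by an Alexander-type isotopy glued over a finite cover'' nor ``cutting the isotopy along the cover'' is a valid operation in dimension $d\ge 2$: Alexander's trick applies only to a homeomorphism \emph{already supported in a ball}, and an isotopy of homeomorphisms cannot be localized by a partition of unity the way a flow of vector fields can. The whole difficulty is to produce, from $f$ close to the identity, homeomorphisms $g_i$ with $\supp g_i$ contained in prescribed boxes and $f=g_1\cdots g_\ell$; along each slice this amounts to modifying an embedding of a ball close to the inclusion so that it becomes the identity near the boundary while remaining equal to $f$ on a sub-box, continuously in the transverse Cantor parameter. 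This is exactly the content of the Edwards--Kirby deformation theorem (topological local contractibility/isotopy extension, generalizing Schoenflies), which the paper invokes and then applies inductively along the skeleta of a cubic box decomposition. Without this input your neighborhood-of-the-identity claim, hence (1) and (2), and the fragmentation lemma feeding into (3), are unsupported except when $d=1$.
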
    

Moreover, when the translation flow is expansive, e.g. for tiling spaces, the connected component of the identity in $\Homeo(\Omega)$ is the group of deformations.
\begin{prop}\label{prop:direct2} Let $\Omega$ be a minimal tilable lamination. If the translation flow is expansive, then the identity component $\Homeo^0(\Omega)$  is equal to $D(\Omega)$ and is open in $\Homeo(\Omega)$.
\end{prop}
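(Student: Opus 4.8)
The plan is to deduce the statement from Theorem~\ref{teo:main} applied to $G=\Homeo_\cL(\Omega)$, the only genuinely new input being that expansiveness forces every homeomorphism $C^0$-close to the identity to preserve the leaves. Concretely, I would first isolate the following key lemma: \emph{if the translation flow is expansive, then $\Homeo_\cL(\Omega)$ is a neighbourhood of the identity in $\Homeo(\Omega)$.} Granting this, the remainder is soft topological-group bookkeeping, and the expansiveness hypothesis is used only once, inside the lemma.

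To prove the key lemma, fix an expansiveness constant $\epsilon>0$ and let $h\in\Homeo(\Omega)$ satisfy $\sup_{x}d(h(x),x)<\epsilon$. Since the leaves are the path components of $\Omega$ and $h$ is a homeomorphism, $h$ maps each leaf $L_x$ homeomorphically onto $L_{h(x)}$. Writing $y=h(x)$ and using that the leaves are freely parametrised copies of $\R^d$ through $v\mapsto T_v x$, the leafwise restriction of $h$ is expressed in leaf coordinates by a homeomorphism $\phi\colon\R^d\to\R^d$ with $\phi(0)=0$, so that $h(T_v x)=T_{\phi(v)}y$ for all $v\in\R^d$. The uniform bound on $h$ then gives $d(T_v x,T_{\phi(v)}y)<\epsilon$ for every $v$, i.e. the orbit of $y$ $\epsilon$-shadows the orbit of $x$ along the reparametrisation $\phi$. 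By expansiveness this forces $y\in L_x$, that is $h(x)\in L_x$. As $x$ was arbitrary, $h\in\Homeo_\cL(\Omega)$, proving the lemma.

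With the lemma in hand I would assemble the proposition as follows. By Theorem~\ref{teo:main}(1)--(2) applied to $G=\Homeo_\cL(\Omega)$, the set $D(\Omega)=\Homeo^0_\cL(\Omega)$ is open in $\Homeo_\cL(\Omega)$, so there is an open set $V\subseteq\Homeo(\Omega)$ with $V\cap\Homeo_\cL(\Omega)=D(\Omega)$; let $W\subseteq\Homeo_\cL(\Omega)$ be the open neighbourhood of the identity supplied by the lemma. Then $V\cap W$ is open, contains the identity, and satisfies $V\cap W\subseteq V\cap\Homeo_\cL(\Omega)=D(\Omega)$. Since $D(\Omega)$ is the path component of the identity it is a subgroup of $\Homeo(\Omega)$, and a subgroup containing a neighbourhood of the identity is open (translate the neighbourhood), hence also closed (its complement is a union of cosets). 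Being open, closed, connected and containing the identity, $D(\Omega)$ coincides with $\Homeo^0(\Omega)$, which yields both assertions at once.

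I expect the only real difficulty to lie in the key lemma, and within it in the verification that the leafwise restriction of $h$ yields an \emph{admissible} reparametrisation $\phi$ — continuous, a homeomorphism of $\R^d$, and fixing $0$ — to which the definition of expansiveness can be applied; this is precisely where freeness of the $\R^d$-action and the local product structure of $\Omega$ enter. Once $\phi$ is known to be a legitimate reparametrisation, the shadowing estimate and the definition of expansiveness close the argument at once, and the final topological-group step is entirely formal.
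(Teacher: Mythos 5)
Your proof is correct and follows essentially the same route as the paper: your key lemma is precisely the paper's Proposition~\ref{lem:clopenhomeo} (expansiveness forces any $f$ with $\delta(f,\mathrm{Id})<\eta$ to be leaf-preserving, via the same shadowing/reparametrisation argument), and your assembly step reproduces the content of Corollary~\ref{cor:idcomponent} combined with Proposition~\ref{prop:connexite}. The only cosmetic difference is that you phrase the final step as ``a clopen path-connected subgroup containing the identity equals the identity component,'' whereas the paper splits this into two inclusions, but the substance is identical.
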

The proof of Theorem \ref{teo:main} follows the same strategy as in \cite{F} for the triangulated manifolds (see \cite{B} for a recent survey). In the next section we recall basic properties of  tilable laminations and their homeomorphisms. By using a generalization of the Schoenflies Theorem due to R. Edwards and R. Kirby, we show, in Section \ref{sec:partition}, the groups under consideration satisfy the partition property (called also fragmentation property), and we prove  Proposition \ref{prop:direct2} and the items (1), (2) of Theorem \ref{teo:main}. We give in Section \ref{sec:simple} a sufficient condition for a commutator subgroup of $\Homeo_{\cL}(\Omega)$ to be simple. Next,  we prove  in Section \ref{sec:perfect}  that the groups $\Homeo_{\cL}(\Omega) $ and $\Homeo_{vsp}(\Omega)$  are perfect and we conclude the proof  of Theorem \ref{teo:main} with the main result of Section \ref{sec:simple}. In the last section, we show, for the one-dimensional case, that these groups are {\em uniformly perfect}: more precisely, any element can be written as a product of two commutators in  the group.  This last  result is similar to \cite{F2} for $C^{\infty}$ leaf preserving diffeomorphisms of $C^{\infty}$ foliations.

\section{Preliminaries}\label{sec:preliminaries}

\subsection{Background on tilable lamination}\label{sec:background}

We recall here some basic properties of tilable laminations  and we refer to \cite{BBG} for a more detailed exposition. Let $\Omega$ be a compact metric space. Assume that  there exist a cover of $\Omega$ by open sets $U_{i}$  and homeomorphisms called {\em charts} $h_{i} \colon U_{i} \to  D_{i}\times C_{i} $ where $C_{i}$ is a topological space and $D_{i}$ is an open set of $\R^{d}$. These open  sets and charts define an \emph{atlas} of a \emph{flat   lamination} if the transition maps $h_{j}\circ h_{i}^{-1}$ read on their domains of definition 
\begin{equation}
\label{transitionmaps}
h_{i,j} (t,c) = ( t+a_{i,j}, \gamma_{i,j} (c) ),
\end{equation}
where the $a_{i,j}$'s are elements of $\R^{d}$ and the maps $\gamma_{i,j}$ are continuous. Two atlases are \emph{equivalent} if their union is also an atlas.

A flat lamination  is the data of a compact metric space $\Omega$  together with an equivalence class of atlases $\cL$. A {\em  box} is the domain of a chart in the maximal atlas of $\cL$. 
For any point $x$ in a box $B$ with coordinates $( t_{x}, c_{x })$ in the chart $h$, the set $h^{-1}( D \times \{c_{x}\})$ is called the \emph{slice} and the set $h^{-1}( \{t_{x}\} \times C)$ is called the \emph{vertical} of $x$ in $B$. Since a transition map  transforms slices into slices and verticals into verticals, these definitions make sense. As usual, a \emph{leaf} of $\Omega$ is the smallest connected set that contains all the slices it intersects. From \eqref{transitionmaps}, it is clear that each leaf is a manifold with a flat Riemannian metric.

\begin{definition}
A \emph{tilable lamination} $(\Omega, \cL)$  (or a $\R^{d}$-solenoid) is a flat lamination such that
\begin{itemize}
\item  every  leaf of $\cL$ is isometric to $\R^{d}$. 
\item  There exists a transversal $\Xi$ (a compact subset of $\Omega$ such that for any leaf $L$ of $\cL$, $L \cap \Xi$ is non empty and a discrete subset with respect the manifold topology of the leaf $L$) which is a Cantor set.  
 \end{itemize}
\end{definition}
For short, we will  speak about $ \Omega$ as a tilable lamination when there is no confusion. If every leaf is dense in $\Omega$, we say that the lamination is {\em minimal}. By \eqref{transitionmaps}, the action by translations on $\R^d$ can be transported to a local action (also by translations) along the slices. In fact, these local translations induce  a continuous  and free $\R^{d}$-action $T$ over $\Omega$, see  \cite{BBG} for details. We refer to this action as the \emph{translation flow} over $\Omega$. To simplify the notations we write  $\omega - t := T(t, \omega)$ for $\omega$ in $\Omega$ and $t$ in $\R^d$. It is easy to see that the leaves of the lamination  coincide with the orbits of the translation flow. Again by \eqref{transitionmaps}, the canonical orientation on $\R^d$ induce an orientation on each leaf of $\Omega$. Given a box $B$ that reads $h^{-1}(D \times C)$ in a chart $h$, by identifying a vertical in $B$  with the Cantor set $C$, we can write $B = T(C, D) = C - D$, thus avoiding the explicit reference to the chart $h$.

Basic examples of  minimal tilable laminations are given by the suspensions   of  minimal $\Z^d$ action on a Cantor set with locally constant ceiling functions. The  tilable lamination structure also appears in the  dynamical systems associated to  non-periodic repetitive tilings and Delone sets of the Euclidean space, see \cite{BBG}.  In  these examples, the translation flow is {\em expansive} in the following sense (see \cite{PFK}).
\begin{definition}\label{def:expansive}
Let $\eta>0$. The translation flow of a tilable lamination $\Omega$ is said to be $\eta$-expansive if when one has points $x,y \in \Omega$ and a homeomorphism $h \colon \R^d \to \R^d$  satisfying  $h(0)=0$ and 
$d(x - t, y - h(t)) < \eta$ for all $t\in \R^d$, then there must exist $t_{0}\in B_{\eta}(0)$ such that $x - t_{0} = y$.

The translation flow is said {\em expansive}, if it is $\eta$-expansive for some constant $\eta$. 
\end{definition}
This last property will allow us to show, in the next section, that any homeomorphism that is close enough to the identity must by leaf-preserving. 

A box in $\Omega$ is said to be \emph{internal} if its closure is included in another box of $\Omega$. In all the rest of the paper, any box will be supposed to internal.  An internal box $B$ is said to be \emph{of ball type} if it can be written as $B = C - D$, where $D$ is an open ball in $\RR^d$. Instead, if $D$ is a $d$-cube $(a_{1}, b_{1}) \times \cdots \times (a_{d},b_{d})$ in  $\R^{d}$, then $B$ is said to be a box of \emph{cubic} type.  In this case, and if $f$ is a $\ell$-face ($0 \le \ell \le d$) of the cube $D$, then the set $C - f$ 
is said to be an $\ell$-\emph{vertical boundary} of $B$. A \emph{box cover} of $\Omega$ is a cover $\{B_i\}_i$ of $\Omega$, where each $B_i$ is a box. Box covers of ball type and cubic type are defined in the same way. 

\begin{definition}
A collection of boxes $\cB=\{B_i\}_{i=0}^{t}$ in  $\Omega$ is  a \emph{box decomposition}, if the following assertions hold:
\begin{enumerate}
\item the $B_{i}$'s are pairwise disjoint,
\item the closures of the $B_{i}$'s form a  cover of $\Omega$.
\end{enumerate}
Also, if the boxes $B_i$ are of cubic type, then $\cB$ is a box decomposition of cubic type.
\end{definition}
Box decompositions were introduced in \cite{BBG} as a tool in the study of tilable laminations. The key lemma (see bellow) asserts that any box cover
of cubic type can be turned into a box decomposition of cubic type. It follows that every tilable lamination admits a box decomposition of cubic type. 
\begin{lemma}[\cite{BBG}]\label{lem:BBG}
Let $\Omega$ be a tilable lamination and  $\{B_i\}_{i=0}^{t}$ be a box cover of cubic type of $\Omega$. Then, there exists a box decomposition of cubic type $\cB'=\{B'_i\}_{i=0}^{n}$, such that, for all $i$, whenever $B'_{i}$ intersects $B_{j}$ for some $j$, then it is included in $B_{j}$.
\end{lemma}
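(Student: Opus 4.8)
The plan is to reduce everything to the single operation of \emph{cutting one cubic box by another} and to control this operation through the local product structure of the lamination. The crucial elementary fact is: if $B=C-D$ and $B'=C'-D'$ are cubic boxes (with $C,C'$ clopen pieces of the transversal $\Xi$ and $D,D'$ axis-aligned $d$-cubes), then both $B\cap B'$ and $B'\setminus\overline{B}$ are finite, pairwise disjoint unions of cubic boxes. To see this, note that a point of $B\cap B'$ has a unique writing $T(t,c)=T(t',c')$ with $c\in C$, $c'\in C'$, $t\in D$, $t'\in D'$, and freeness of the flow forces $c'=T(s,c)$ with displacement $s=t-t'\in D-D'$. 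Since $\Xi$ is compact and meets each leaf discretely, only finitely many displacement vectors $s_1,\dots,s_m$ occur in the bounded set $D-D'$, and --- because the transition maps translate the $\R^{d}$-coordinate --- the displacement attached to a point is \emph{locally constant} on the transversal. Hence the clopen sets $C_\ell=\{c\in C:\ T(s_\ell,c)\in C'\}$ describe the relevant part of $C$, and over $C_\ell$ the fibre of $B\cap B'$ is the translate of the cube $E_\ell=D\cap(D'+s_\ell)$; uniqueness of the representation in $B'$ shows these cubes are disjoint for distinct $\ell$, so $B\cap B'=\bigsqcup_\ell (C_\ell-E_\ell)$ is a finite disjoint union of cubic boxes. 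The same clopen partition reduces $B'\setminus\overline{B}$ to finitely many $\R^{d}$-level problems ``a cube minus a subcube'', each solved by the usual grid of at most $3^{d}-1$ axis-aligned subcubes; this again yields cubic boxes precisely because all cubes are aligned (transitions being translations).

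Granting this, I would build the decomposition by processing the boxes of the cover one at a time. Call a finite family of pairwise disjoint cubic boxes \emph{admissible after step $k$} if each member is contained in some $B_i$, is inscribed in each of $B_0,\dots,B_k$ (contained in, or disjoint from, each), and the union of their closures equals $\overline{B_0\cup\cdots\cup B_k}$. Starting from the one-element family $\{B_0\}$, I pass from step $k$ to step $k+1$ as follows. First, each member $P$ is replaced by the cubic boxes of $P\cap B_{k+1}$ together with those of $P\setminus\overline{B_{k+1}}$; by the elementary fact these are cubic boxes, and since any subset of $P$ inherits ``contained in or disjoint from $B_j$'' for $j\le k$, inscription in $B_0,\dots,B_k$ is preserved while inscription in $B_{k+1}$ is gained. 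Because $P\cap\partial B_{k+1}$ is nowhere dense, the union of closures is unchanged. Finally I adjoin cubic boxes carved out of the uncovered part $B_{k+1}\setminus\overline{\bigcup_i P_i}$ --- a finite disjoint union of cubic boxes disjoint from the current family --- after cutting this new material along $B_0,\dots,B_k$ as well, so that the adjoined pieces are inscribed in \emph{all} of $B_0,\dots,B_{k+1}$ and (being inside $B_{k+1}$) contained in a cover box. This restores the covering-of-closures condition at level $k+1$. After the last step the closures cover $\overline{B_0\cup\cdots\cup B_t}=\Omega$, so we obtain a cubic box decomposition $\cB'$ each member of which lies in some $B_j$ and is inscribed in every $B_j$; in particular, whenever a member meets $B_j$ it is contained in $B_j$, which is the assertion.

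The routine parts are the $\R^{d}$-level combinatorics (intersecting aligned cubes, slicing a cube by a subcube's face-hyperplanes) and the bookkeeping that the processed family stays pairwise disjoint with the right closures. The real point --- and the only place the laminated structure enters --- is the control of the \emph{transversal} direction: one must know that cutting along the boundary of $B_{k+1}$ affects each leaf in a way that is locally constant on the Cantor transversal, so that the cut cells are genuine product boxes $C''-E$ and only finitely many of them occur. This is exactly what the locally constant displacement vectors provide, and it is the step I expect to demand the most care; it rests squarely on the translation form \eqref{transitionmaps} of the transition maps together with the discreteness of $\Xi\cap L$ in each leaf $L$.
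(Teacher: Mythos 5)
The paper does not prove Lemma \ref{lem:BBG}: it is quoted from \cite{BBG} without argument, so there is no in-text proof to compare yours against. Your argument is, in substance, the standard one (and essentially that of \cite{BBG}): reduce everything to cutting one cubic box by another, control the transversal direction via return vectors that are locally constant on the transversal, and run a finite induction over the cover while maintaining a pairwise disjoint family of cubic boxes inscribed in the boxes already processed. The outline is correct, the induction closes (inscription passes to subsets, and a point of $P\cap\partial B_{k+1}$ lies in the closure of both $P\cap B_{k+1}$ and $P\setminus\overline{B_{k+1}}$, so the closures still cover), and you correctly isolate the one genuinely laminated ingredient. Two points deserve sharper wording. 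First, finiteness of the displacement vectors $s_1,\dots,s_m$ does \emph{not} follow from compactness of the transversal plus discreteness of $\Xi\cap L$ in each leaf alone: vectors coming from different leaves could a priori accumulate in $\overline{D-D'}$. What actually works is that the set $\{(c,s):\, c\in\overline{C},\ s\in\overline{D-D'},\ c-s\in\overline{C'}\}$ is compact and, by the translation form \eqref{transitionmaps} of the transition maps, its projection to the $s$-coordinate is locally constant, hence has finite image; you state the local constancy in the same sentence, so the ingredient is present, but it is that, not the leafwise discreteness, that does the work. Second, for the sets $C_\ell$ (and their analogues governing $B'\setminus\overline{B}$) to be \emph{clopen}, you need the vertical fibres of the boxes to be compact open in the transversal; this is the paper's convention (a vertical is identified with the Cantor set $C$), but it should be said explicitly, since the cut along $\overline{B_{k+1}}$ tests membership in the closed fibre $\overline{C_{k+1}}$ and a merely open fibre would break the clopen partition and hence the product structure of the cut cells.
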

The union of all the $\ell$-vertical boundaries ($0\le \ell \le d$) of all the boxes  of a box decomposition of cubic type $\cB$  is called the $\ell$-\emph{skeleton} of $\cB$. 

\subsection{Homeomorphisms of tilable laminations}\label{sec:backgroundhomeo}
Let $\Omega$ be a tilable lamination and denote by $\Homeo(\Omega)$ the set of homeomorphisms of $\Omega$. We endow it with the $C^{0}$-topology, which is induced by the distance
\[\delta(f,g) = \sup_{x \in \Omega } d(f(x), g(x))+ \sup_{x \in \Omega} d(f^{-1}(x), g^{-1}(x)), \quad  f,g \in \Homeo(\Omega).\]

The \emph{support} of a homeomorphism $f$ in $\Homeo(\Omega)$ is defined by
\[\operatorname{supp} f = \overline{\{x\in \Omega \mid f(x)\neq x\}}.\] 
It is easy to see that $\operatorname{supp} f$ is  $f$-invariant and $\supp \phi f \phi^{-1} = \phi (\supp f)$ for every $\phi \in \Homeo(\Omega)$.

Since the verticals of  a tilable lamination $\Omega$ are totally disconnected, the path-connected components coincide with the leaves of the lamination. Thus, every  element of $\Homeo(\Omega)$ maps each leaf onto a (possibly different) leaf. We define $ \Homeo_{\cL}(\Omega)$ be the group of all leaf-preserving homeomorphisms of $\Omega$. Recall that a homeomorphism $f$ of $\Omega$ is \emph{homotopic to the identity}, if there exists a continuous map $F:[0,1]\times \Omega\rightarrow \Omega$ such that $F(0,\cdot) = Id$ and $F(1,\cdot) = f$. If, in addition, $F(t,\cdot)$ is a homeomorphism of $\Omega$ for each $t\in \Omega$, then we say that $f$ is \emph{isotopic to the identity} or a {\em deformation}. The set $D(\Omega)$ denotes the group of all the deformations. Clearly, homeomorphisms
that are homotopic to the identity belong to $\Homeo_\cL(\Omega)$. If $\Omega$ is minimal, then the converse is also true. 
\begin{teo}[\cite{A},\cite{K}]
\label{teo:strhomeo}
Let $\Omega$ be a minimal tilable lamination. Then every $f \in\Homeo_\cL(\Omega)$ is homotopic to the identity. In particular, for every $f\in\Homeo_\cL(\Omega)$, there is a continuous map  $\Phi_f\colon \Omega \to \R^d$, called the \emph{displacement} of $f$, which is uniquely defined by the equation
\[ f(\omega) = \omega -\Phi_f(\omega) \quad  \text{for all }\omega \in \Omega.\] 
\end{teo}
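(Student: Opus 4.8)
The plan is to construct the displacement $\Phi_f$ by hand and to read the homotopy off from it; the whole argument is local and uses only that $f$ preserves leaves together with the product structure of boxes. Let $f \in \Homeo_\cL(\Omega)$. Since $f$ preserves each leaf and the leaves are precisely the orbits of the free flow $T$, for every $\omega \in \Omega$ the point $f(\omega)$ lies on the orbit of $\omega$, so there is some $t \in \R^d$ with $f(\omega) = \omega - t$; freeness of $T$ forces this $t$ to be unique. Setting $\Phi_f(\omega) := t$ then yields a well-defined map $\Phi_f \colon \Omega \to \R^d$ with $f(\omega) = \omega - \Phi_f(\omega)$, and uniqueness of $t$ gives uniqueness of $\Phi_f$. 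The entire content of the statement thus reduces to the \emph{continuity} of $\Phi_f$, from which the homotopy will follow immediately.

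To prove continuity I would fix $\omega_0 \in \Omega$, put $t_0 = \Phi_f(\omega_0)$ so that $f(\omega_0) = \omega_0 - t_0$, and choose a box $B'$ containing $f(\omega_0)$, with chart $h' \colon B' \to D' \times C'$ and vertical coordinate $\pi' \colon B' \to C'$. By continuity of $f$ and of the flow, there is a small connected slice $S_0$ containing $\omega_0$ such that both $f(S_0) \subset B'$ and $\{\omega - t_0 : \omega \in S_0\} \subset B'$. The decisive observation is that $f$ carries $S_0$ into a single slice of $B'$: indeed $f(S_0)$ is a connected subset of the leaf through $\omega_0$ contained in $B'$, and since $C'$ is totally disconnected the continuous map $\pi' \circ f|_{S_0}$ is constant, so $f(S_0)$ has constant vertical coordinate. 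The same reasoning applied to $\omega \mapsto \omega - t_0$ shows $\{\omega - t_0 : \omega \in S_0\}$ also lies in a single slice, and as the two slices share the point $f(\omega_0) = \omega_0 - t_0$ they coincide. On this common slice the chart $h'$ identifies points with their $D'$-coordinate; writing $a(\omega), b(\omega) \in \R^d$ for the $D'$-coordinates of $f(\omega)$ and of $\omega - t_0$ and reading the relation $f(\omega) = (\omega - t_0) - (\Phi_f(\omega) - t_0)$ in the chart, one obtains
\[ \Phi_f(\omega) = t_0 + a(\omega) - b(\omega) \qquad (\omega \in S_0), \]
where $a$ and $b$ are continuous (coordinates of continuous maps) and $a(\omega_0) = b(\omega_0)$. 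Hence $\Phi_f$ is continuous at $\omega_0$, and since $\omega_0$ was arbitrary it is continuous on $\Omega$.

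Finally I would set $F \colon [0,1]\times\Omega \to \Omega$, $F(s,\omega) = \omega - s\,\Phi_f(\omega) = T(s\,\Phi_f(\omega),\omega)$, which is continuous because $T$ and $\Phi_f$ are, and satisfies $F(0,\cdot) = \mathrm{Id}$ and $F(1,\cdot) = f$; this exhibits $f$ as homotopic to the identity, completing the proof. The main obstacle is the continuity of $\Phi_f$, and inside it precisely the slice-matching step of the second paragraph: it is here that the total disconnectedness of the transversal is indispensable, since it is exactly what forces a leaf-preserving homeomorphism to respect the local product structure and send slices to slices.
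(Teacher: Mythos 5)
The paper does not prove this statement itself: it is imported from \cite{A} and \cite{K}, and the entire content of those references is precisely the step your argument glosses over. Your pointwise definition of $\Phi_f$ via freeness of the translation flow, its uniqueness, and the final homotopy $F(s,\omega)=\omega-s\,\Phi_f(\omega)$ are all fine. The gap is in the continuity proof. The set $S_0$ you work with is a \emph{slice}, i.e.\ a connected plaque contained in the single leaf through $\omega_0$; a slice is not a neighbourhood of $\omega_0$ in $\Omega$, since neighbourhoods are boxes $D\times C$ with $C$ a Cantor set. Your argument therefore only establishes that $\Phi_f$ is continuous along each leaf, which is the easy, tangential half of the claim, and says nothing about continuity in the transverse direction.

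The transverse direction is where the difficulty lives, and the slice-matching step breaks down there. For $\omega$ close to $\omega_0$ but on a different leaf, both $f(\omega)$ and $\omega-t_0$ do lie in $B'$ and on the leaf of $\omega$, but a dense leaf meets $B'$ in infinitely many distinct slices, and there is no a priori reason for $f(\omega)$ and $\omega-t_0$ to lie in the \emph{same} one: they do so if and only if $\Phi_f(\omega)$ is already close to $t_0$, which is what you are trying to prove, and the connectedness argument that worked inside a slice is unavailable because the relevant transversal is totally disconnected. Put differently, $\Phi_f$ automatically has closed graph (if $\omega_k\to\omega$ and $\Phi_f(\omega_k)\to t$, then $f(\omega)=\omega-t$, so $t=\Phi_f(\omega)$ by freeness), hence continuity is exactly local boundedness of $\Phi_f$; what must be excluded is $\lvert\Phi_f(\omega_k)\rvert\to\infty$ along a transverse sequence $\omega_k\to\omega_0$, and this is perfectly compatible with continuity of $f$ alone, since the dense leaf of $\omega_k$ can return near $f(\omega_0)$ at an enormous flow time. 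Ruling this out is the actual theorem of \cite{K} (``topological friction''; see \cite{A} for the one-dimensional case), and it requires a global argument exploiting compactness and minimality --- e.g.\ Baire category applied to the closed sets $\{\lvert\Phi_f\rvert\le n\}$ --- rather than a local chart computation. The fact that your proof never invokes minimality, which is a standing hypothesis of the statement, is the telltale sign that an essential ingredient is missing.
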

We say the displacement of $f$ is smaller than $\varepsilon$ when $\vert \vert \Phi \vert \vert_{\infty} < \varepsilon$. 

When the translation flow is expansive, we get the following refinement.
\begin{prop}\label{lem:clopenhomeo} Let $\Omega$ be tilable lamination. Suppose its translation flow is $\eta$-expansive. Then, $\Homeo_{\cL}(\Omega)$ is open in $\Homeo(\Omega)$.
\end{prop}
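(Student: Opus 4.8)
The plan is to first prove that the whole open $\eta$-ball around the identity in $\Homeo(\Omega)$ is contained in $\Homeo_\cL(\Omega)$, and then to spread this neighborhood over the whole subgroup using that $(\Homeo(\Omega),\delta)$ is a topological group. So I claim: if $g\in\Homeo(\Omega)$ satisfies $\delta(g,\mathrm{Id})<\eta$, then $g$ preserves every leaf. Recall that, since the leaves are exactly the path-connected components of $\Omega$, every $g\in\Homeo(\Omega)$ maps the leaf $L_x$ through a point $x$ onto the leaf $L_{g(x)}$. Fixing $x$ and writing $y=g(x)$, I would use the orbit maps $\phi_x\colon\R^d\to L_x$, $\phi_x(t)=x-t$, and $\phi_y\colon\R^d\to L_y$, $\phi_y(s)=y-s$, which are homeomorphisms (indeed isometries) onto the leaves equipped with their intrinsic topology, to define $h\colon\R^d\to\R^d$ by $h=\phi_y^{-1}\circ g\circ\phi_x$; equivalently $g(x-t)=y-h(t)$ for all $t$, and clearly $h(0)=0$.

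The key technical point is that $h$ is a homeomorphism of $\R^d$, which amounts to showing that $g$ restricts to a homeomorphism $L_x\to L_y$ for the leaf topologies. This is where total disconnectedness of the transversal enters: in any box $B=C-D$ the trace $L_y\cap B$ is a union of slices, and since $C$ is totally disconnected the connected components of this trace are precisely the individual slices. Hence, given $p\in L_x$ and a box $B'\ni g(p)$, continuity of $g$ provides a box $B\ni p$ of ball type with $g(B)\subseteq B'$; the image under $g$ of the (connected) slice of $p$ in $B$ is then a connected subset of $L_y\cap B'$, so it lies in a single slice of $B'$, i.e. in a leaf-neighborhood of $g(p)$. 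Thus $g|_{L_x}$ is continuous for the leaf topologies, and applying the same argument to $g^{-1}$ shows $g^{-1}|_{L_y}$ is too; therefore both $h$ and $h^{-1}=\phi_x^{-1}\circ g^{-1}\circ\phi_y$ are continuous, so $h$ is a homeomorphism fixing $0$.

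With $h$ in hand, the estimate is immediate: for every $t\in\R^d$,
\[
 d\bigl(x-t,\;y-h(t)\bigr)=d\bigl(\phi_x(t),\,g(\phi_x(t))\bigr)\le\sup_{z\in\Omega}d(z,g(z))\le\delta(g,\mathrm{Id})<\eta .
\]
By $\eta$-expansiveness applied to the points $x$, $y$ and the homeomorphism $h$, there exists $t_0\in B_\eta(0)$ with $x-t_0=y=g(x)$; hence $g(x)$ lies on the leaf $L_x$. As $x$ was arbitrary, $g$ preserves every leaf, which proves that $\{g:\delta(g,\mathrm{Id})<\eta\}\subseteq\Homeo_\cL(\Omega)$.

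Finally, to deduce openness I would invoke that composition and inversion are $\delta$-continuous (inversion is even a $\delta$-isometry, since $\delta(g^{-1},\mathrm{Id})=\delta(g,\mathrm{Id})$), so that $\Homeo(\Omega)$ is a topological group and each left translation $L_f\colon g\mapsto fg$ is a homeomorphism. Given any $f\in\Homeo_\cL(\Omega)$, the set $f\cdot\{g:\delta(g,\mathrm{Id})<\eta\}$ is then an open neighborhood of $f$; since $\Homeo_\cL(\Omega)$ is a subgroup containing both $f$ and the $\eta$-ball around the identity, this neighborhood lies inside $\Homeo_\cL(\Omega)$. Hence every point of $\Homeo_\cL(\Omega)$ is interior and the subgroup is open. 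I expect the main obstacle to be exactly the verification that $g$ restricts to a leaf-topology homeomorphism, so that $h$ is genuinely a homeomorphism of $\R^d$ as required by the expansiveness hypothesis; the remaining steps (the displacement estimate and the topological-group propagation) are routine once that localization argument is in place.
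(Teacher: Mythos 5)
Your proof is correct and follows the same route as the paper's: show that the $\eta$-ball around the identity lies in $\Homeo_{\cL}(\Omega)$ by constructing the induced map $h$ on $\R^d$ from the fact that $g$ sends the leaf of $x$ onto the leaf of $g(x)$, estimate $d(x-t,\,g(x)-h(t))<\eta$, invoke $\eta$-expansiveness, and then propagate the neighborhood by left translation. You are in fact slightly more careful than the paper, which only asserts that $h$ is continuous even though Definition~\ref{def:expansive} requires a homeomorphism; your localization argument using the total disconnectedness of the transversal supplies that missing verification.
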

\begin{proof} 
Define $B = \{f\in\Homeo(\Omega)\mid \delta(f,Id)< \eta\}$ and take any $f\in B$. Since the translation flow is free and homeomorphisms map leaves onto leaves, for every $\omega\in \Omega$
there is a continuous map $h:\R^d \to \R^d$ such that $f(\omega)- s  = f(\omega -h(s))$ for all $s \in \R^d$.

Thus, 
\[ d(f(\omega)-s, \omega -h(s)) \leq\delta(f,Id) < \eta \quad \text{for all } s\in\R^d.\]
It follows from the expansivity of $\Omega$ that there exists  a $t_{0}\in \R^d$ such that $f(\omega) = \omega - t_{0}$. Since $\omega$ was arbitrary, this means that $f$ preserves each leaf and thus $f\in  \Homeo_{\cL}(\Omega)$, which means that $B\subset\Homeo_{\cL}(\Omega)$. The fact that $\Homeo_{\cL}(\Omega)$ is open
now follows from a standard argument.
\end{proof}

\begin{coro}\label{cor:idcomponent}If the translation flow on the tilable lamination $\Omega$ is expansive, then $\Homeo^{0}(\Omega)$ is open and 
$$\Homeo^{0}(\Omega) =\Homeo_{\cL}^0(\Omega).$$
\end{coro}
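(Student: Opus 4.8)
The key input is Proposition \ref{lem:clopenhomeo}, which makes $\Homeo_\cL(\Omega)$ open in $\Homeo(\Omega)$; the plan is to upgrade this to clopen and then read off everything from elementary topological-group facts. First I would record that an open subgroup $H$ of a topological group $G$ is automatically closed: the complement $G \setminus H$ is the union of the cosets $gH$ with $g \notin H$, and each such coset is open because left translation by $g$ is a homeomorphism of $G$. Applying this to $H = \Homeo_\cL(\Omega)$ shows that it is clopen in $\Homeo(\Omega)$.

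With $\Homeo_\cL(\Omega)$ clopen, I would show that the two identity components coincide. Since $\Homeo^0(\Omega)$ is connected and contains $Id$, the intersection $\Homeo^0(\Omega) \cap \Homeo_\cL(\Omega)$ is a nonempty relatively clopen subset of $\Homeo^0(\Omega)$, hence equal to all of it; thus $\Homeo^0(\Omega) \subseteq \Homeo_\cL(\Omega)$. Being connected and passing through $Id$, the set $\Homeo^0(\Omega)$ then lies inside the identity component $\Homeo_\cL^0(\Omega)$ of $\Homeo_\cL(\Omega)$. Conversely, $\Homeo_\cL^0(\Omega)$ is itself a connected subset of $\Homeo(\Omega)$ containing $Id$, so it is contained in $\Homeo^0(\Omega)$. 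These two inclusions give $\Homeo^0(\Omega) = \Homeo_\cL^0(\Omega)$.

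For openness I would invoke Proposition \ref{prop:direct2}, which under expansivity identifies $\Homeo^0(\Omega)$ with the deformation group $D(\Omega)$ and asserts that it is open in $\Homeo(\Omega)$; combined with the equality above, this finishes the proof. The genuine content sits in this last step: the identity component of a topological group need not be open in general, and here openness holds only because expansivity (through Proposition \ref{lem:clopenhomeo}) confines small homeomorphisms to $\Homeo_\cL(\Omega)$, where Theorem \ref{teo:strhomeo} lets one join any small-displacement $f$ to the identity by the homotopy $F(s,\omega) = \omega - s\,\Phi_f(\omega)$. Verifying that each $F(s,\cdot)$ is a homeomorphism is the real obstacle, and it is exactly what the proof of Proposition \ref{prop:direct2} supplies; everything else in the argument is formal.
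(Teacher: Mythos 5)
Your proof of the equality $\Homeo^{0}(\Omega)=\Homeo_{\cL}^{0}(\Omega)$ is correct and is essentially the paper's own argument: Proposition \ref{lem:clopenhomeo} makes $\Homeo_{\cL}(\Omega)$ an open, hence clopen, subgroup, connectedness forces $\Homeo^{0}(\Omega)\subset\Homeo_{\cL}(\Omega)$ and therefore $\Homeo^{0}(\Omega)\subset\Homeo_{\cL}^{0}(\Omega)$, and the reverse inclusion is automatic. The problem is the openness claim. You derive it from Proposition \ref{prop:direct2}, but in this paper Proposition \ref{prop:direct2} is not an independent input: it is obtained by combining the present corollary with Proposition \ref{prop:connexite} ($D(\Omega)=\Homeo_{\cL}^{0}(\Omega)$ is open in $\Homeo_{\cL}(\Omega)$), so citing it here is circular. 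The non-circular route is to wait for Proposition \ref{prop:connexite}: once one knows $\Homeo_{\cL}^{0}(\Omega)=D(\Omega)$ is open in $\Homeo_{\cL}(\Omega)$, openness of $\Homeo^{0}(\Omega)$ in $\Homeo(\Omega)$ follows from the equality together with the openness of $\Homeo_{\cL}(\Omega)$ given by Proposition \ref{lem:clopenhomeo}. (The paper's own proof of the corollary in fact only establishes the equality and silently defers the openness to Section \ref{sec:partition}; but that deferral cannot be replaced by an appeal to Proposition \ref{prop:direct2}.)

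A second, smaller issue is your description of where the ``genuine content'' lies. You propose joining a small-displacement $f$ to the identity by $F(s,\omega)=\omega-s\,\Phi_{f}(\omega)$ and say the only obstacle is checking that each $F(s,\cdot)$ is a homeomorphism. For $d\ge 2$ this straight-line interpolation need not be injective for intermediate $s$ (a convex combination of the identity and a leafwise homeomorphism of $\R^{d}$ is not in general injective), and the paper never uses it. The actual isotopy is built quite differently: Proposition \ref{prop:presque-partition} fragments a small-displacement $f$ into homeomorphisms supported in boxes of ball type, and Lemma \ref{lem:isot-close-id} isotopes each factor to the identity by the Alexander trick. So the openness of the identity component rests on the fragmentation machinery of Section \ref{sec:partition}, not on a linear homotopy.
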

\begin{proof} 
Since the connected component is the greatest connected set containing the identity,  we have $\Homeo_{\cL}^{0}(\Omega) \subset \Homeo^{0}(\Omega)$. By  Proposition \ref{lem:clopenhomeo} and the connexity property, we get $\Homeo^{0}(\Omega) \subset \Homeo_{\cL}(\Omega)$ and so $\Homeo^{0}(\Omega) \subset \Homeo_{\cL}^{0}(\Omega)$, which concludes the proof.
\end{proof}

In the context of laminations arising from the study of non-periodic tilings, an important class of homeomorphisms is given by homeomorphisms with the following property. 
\begin{definition}\label{def:lcrt}
A homeomorphism $f \in  \Homeo(\Omega)$  preserves the vertical structure if, given  a point $x$ in a vertical $C$ of a box $B$ and  a vertical $C'$ of a box $B'$ containing $f(x)$, then there is a clopen subset $\tilde{C} \subset C$ containing $x$ such that for every $y \in \tilde{C}$, $f(y) \in C'$.
\end{definition}
Alternatively, provided that $\Omega$ is minimal, a map $f \in  \Homeo_{\cL}(\Omega)$ preserves the vertical structure if and only if  its deplacement $\Phi$ is transversally locally constant. In the context of non-periodic repetitive tilings, this notion corresponds to the 
notion of \emph{strong pattern-equivariance} (see \cite{Ke}) of the map $t\mapsto \Phi_f(\omega-t)$ for any fixed $\omega \in \Omega$. We denote by $\Homeo_{vsp}(\Omega)$ the collection of homeomorphisms preserving the vertical structure. It is plain to check that $\Homeo_{vsp}(\Omega)$ is dense in $\Homeo(\Omega)$. We will denote by $D_{vsp}(\Omega)$ the path-connected component of the identity in $\Homeo_{vsp}(\Omega)$.


\section{Partition property}\label{sec:partition}
\begin{definition}
A group $G$ of homeomorphisms of $\Omega$ satisfies the {\em partition property} if for every box cover $\{B_{i}\}_{i=0}^{t}$ of $\Omega$, and for any $f \in G$, there exists a decomposition 
$f= g_{1}\cdots g_{\ell}$ where $g_{i} \in G$ and   $\operatorname{supp} g_{i} \subset B_{j(i)}$ for $i = 1, \ldots, \ell$.   
\end{definition}

In this section, following \cite{F} and using the box decomposition structure of tilable laminations we show:

\begin{prop}\label{prop:partition} Let $\Omega$ be a minimal  tilable lamination.
The two groups $ \operatorname{Homeo}_{\cL}^0(\Omega)$  and $ \operatorname{Homeo}_{vsp}^0(\Omega)$  satisfy the partition property.
\end{prop}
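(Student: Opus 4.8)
Fix the box cover $\{B_i\}_{i=0}^{t}$ and let $G$ be either $\Homeo_\cL^0(\Omega)$ or $\Homeo_{vsp}^0(\Omega)$. The plan is to show that
\[
P=\{\,f\in G : f=g_1\cdots g_\ell\ \text{with}\ g_i\in G\ \text{and}\ \supp g_i\subset B_{j(i)}\,\}
\]
is all of $G$. First I would note that $P$ is a subgroup of $G$: concatenating two factorizations gives closure under products, and $\supp g^{-1}=\supp g$ gives closure under inverses. A subgroup containing a neighborhood of the identity is open, hence closed, and therefore, $G$ being connected (it is a connected component), equals $G$. Since the set of $f\in G$ with $\norm{\Phi_f}_\infty<\varepsilon$ is a neighborhood of the identity in $G$ (Theorem~\ref{teo:strhomeo}), it suffices to prove the following \emph{local fragmentation lemma}: for $\varepsilon$ small enough, every $f\in G$ with $\norm{\Phi_f}_\infty<\varepsilon$ admits a factorization into elements of $G$ each supported in a single $B_i$.

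To prove it I would first pass to a fine box decomposition subordinate to the cover: covering each $B_i$ by small cubic boxes and applying Lemma~\ref{lem:BBG} yields a cubic box decomposition $\cB'$ each of whose boxes lies in some $B_j$, taken fine enough that a neighborhood of the closed star of every cell of $\cB'$ still lies in a single $B_j$. Writing $S_k$ for the $k$-skeleton of $\cB'$, I would run the Fisher-type induction \cite{F} on $k=0,1,\dots,d-1$. Setting $f_{-1}=f$ and assuming $f_{k-1}$ is the identity on a neighborhood of $S_{k-1}$, I would construct $g_k=\prod_\sigma g_{k,\sigma}$, the product over the $k$-cells $\sigma$ of homeomorphisms $g_{k,\sigma}$ each supported in a star-neighborhood of $\sigma$ (hence in a single cover element, and arranged not to disturb the identity already achieved near $S_{k-1}$), so that $f_k:=g_k^{-1}f_{k-1}$ is the identity on a neighborhood of $S_k$. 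Since $f_{k-1}=g_kf_k$, after the stage $k=d-1$ one obtains $f=g_0g_1\cdots g_{d-1}f_{d-1}$, where $f_{d-1}$ is supported in the pairwise disjoint interiors of the top-dimensional boxes and hence also splits into single-box factors.

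The construction of the $g_{k,\sigma}$ is the heart of the matter and the main obstacle. Here I would invoke the localization (local contractibility) theorem of Edwards and Kirby. A star-neighborhood of $\sigma$ carries a product structure $N\times C$ with $N\subset\R^d$ open and $C$ a Cantor vertical, and for a homeomorphism of $\R^d$ close enough to the identity the Edwards–Kirby theorem produces, in a manner that is canonical and continuous in the $C^0$-data (and equal to the identity when the input is), a homeomorphism agreeing with it on a compact core and supported in $N$. Applying this to the leafwise restrictions of $f_{k-1}$ and using continuity in the transversal coordinate $c\in C$, the fiberwise correctors assemble into a genuine homeomorphism $g_{k,\sigma}$ of $\Omega$, supported in a box and isotopic to the identity, hence in $\Homeo_\cL^0(\Omega)$. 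The delicate points are intertwined and must be balanced: the accumulated homeomorphisms $f_k$ have to stay close enough to the identity for Edwards–Kirby to remain applicable at each of the finitely many stages, which couples the choice of $\varepsilon$ to the fineness of $\cB'$, and the supports have to be confined to single cover elements without undoing the progress made over the lower skeleta.

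Finally, to treat $\Homeo_{vsp}^0(\Omega)$ by the same argument I would check that the construction respects the vertical structure. When $\Phi_f$ is transversally locally constant, the leafwise data fed into the Edwards–Kirby construction is locally constant in $c$, and since its output depends continuously and canonically on that data, each $g_{k,\sigma}$ inherits a transversally locally constant displacement and so lies in $\Homeo_{vsp}^0(\Omega)$. Thus the same factorization works verbatim for both groups, which completes the plan.
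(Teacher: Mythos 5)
Your proposal is correct and follows essentially the same route as the paper: reduce by connectedness to homeomorphisms of small displacement, refine the cover to a cubic box decomposition via Lemma~\ref{lem:BBG}, run the Fisher-type induction over the skeleta using the Edwards--Kirby theorem to produce box-supported correctors (the paper's Lemma~\ref{lem:2_7} and Proposition~\ref{prop:presque-partition}), and handle the $vsp$ case by transversal local constancy. The only cosmetic difference is that you argue the factors lie in the identity component via Alexander's trick directly, while the paper routes this through its Proposition~\ref{prop:connexite}; both are fine.
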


We will also show assertions (1) and (2) of Theorem \ref{teo:main}. To prove this result, we will use  several lemmas. We start by showing that every map having its  support included in a box of ball type is a deformation.
 \begin{lemma}\label{lem:isot-close-id}
 Let $\Omega$ be a minimal tilable lamination and $B$ be a box of ball type. Any map $g \in \operatorname{Homeo}_{\cL}(\Omega)$ (resp. in $\operatorname{Homeo}_{vsp}(\Omega)$), with support in the interior of $B$  is a deformation of the identity (resp. $g \in D_{vsp}(\Omega)$).  
 \end{lemma}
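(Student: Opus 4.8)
The plan is to realise $g$, inside the ball-type box $B$, as a fibrewise homeomorphism of a Euclidean ball over the Cantor transversal, and then to connect it to the identity by the Alexander isotopy trick applied simultaneously on every fibre. The only genuinely delicate point is the continuity of this isotopy in the $C^{0}$-topology at the instant where the supports collapse to the centre of the ball.

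First I would fix the product structure. In a chart, $B\cong D\times C$ with $C$ a Cantor set and $D\subset\RR^{d}$ an open ball which, after translating the $\RR^{d}$-coordinate, I take centred at the origin, $D=\{t:|t|<r\}$. As $\supp g$ is a compact subset of $B$, its projection to $D$ lies in some $\{|t|\le\rho\}$ with $\rho<r$, so $g$ is the identity on the collar $\{\rho\le|t|<r\}\times C$. Since $g$ fixes $\Omega\setminus B$ pointwise it satisfies $g(B)=B$; being leaf-preserving it permutes the slices inside $B$ (the connected components of the intersection of each leaf with $B$), and since it fixes the collar of every slice it must preserve each slice. Thus in the coordinates $(t,c)$ the map reads $(t,c)\mapsto(\psi_{c}(t),c)$, where each $\psi_{c}\colon D\to D$ is a homeomorphism equal to the identity for $\rho\le|t|<r$, and $(t,c)\mapsto\psi_{c}(t)$ is continuous (this is just the continuity of $g$; equivalently, of its displacement $\Phi_{g}$ from Theorem~\ref{teo:strhomeo}).

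Next comes the construction. I would define $G\colon[0,1]\times\Omega\to\Omega$ by $G(s,\cdot)=\mathrm{id}$ off $B$ and, on $B$,
\[
G(s,(t,c))=(\psi_{c}^{\,s}(t),c),\qquad
\psi_{c}^{\,s}(t)=
\begin{cases}
s\,\psi_{c}(t/s), & |t|<sr,\\
t, & sr\le|t|<r,
\end{cases}
\qquad 0<s\le1,\qquad \psi_{c}^{\,0}=\mathrm{id}.
\]
Because $\psi_{c}(t/s)=t/s$ once $|t/s|\ge\rho$, each $\psi_{c}^{\,s}$ is the identity for $|t|\ge s\rho$; hence $G(s,\cdot)$ is supported in $\{|t|\le s\rho\}\times C\subset B$ and, being a fibrewise Alexander rescaling of the homeomorphism $\psi_{c}$, is itself a leaf-preserving homeomorphism of $\Omega$, with $G(0,\cdot)=\mathrm{id}$ and $G(1,\cdot)=g$.

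The hard part will be the joint continuity of $G$, hence the continuity of $s\mapsto G(s,\cdot)$ for the metric $\delta$. For $s$ bounded away from $0$ this follows from continuity of $(t,c)\mapsto\psi_{c}(t)$. At $s=0$ I would argue directly: if $(s_{n},t_{n},c_{n})\to(0,t_{0},c_{0})$ then either $|t_{n}|\ge s_{n}\rho$, where the fibre coordinate is $t_{n}\to t_{0}$, or $|t_{n}|<s_{n}\rho$, forcing $t_{0}=0$ while $|s_{n}\psi_{c_{n}}(t_{n}/s_{n})|<s_{n}r\to0$; the uniform bound $\psi_{c}(D)\subset D$ is exactly what makes this estimate independent of how $\psi_{c}$ varies with $c$. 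Since $\delta$ also controls inverses and $G(s,\cdot)^{-1}$ is the Alexander rescaling of $\psi_{c}^{-1}$, the same estimate applies, so $s\mapsto G(s,\cdot)$ is a path in $\Homeo_{\cL}(\Omega)$ from the identity to $g$, proving $g\in D(\Omega)$. Finally, for the $\Homeo_{vsp}(\Omega)$ statement, the displacement of such a $g$ is transversally locally constant, i.e.\ $c\mapsto\psi_{c}$ is locally constant on a clopen partition of $C$; as $\psi_{c}^{\,s}$ depends on $c$ only through $\psi_{c}$, the same partition shows every $G(s,\cdot)$ preserves the vertical structure, so the path lies in $\Homeo_{vsp}(\Omega)$ and $g\in D_{vsp}(\Omega)$.
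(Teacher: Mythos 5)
Your proposal is correct and follows essentially the same route as the paper: write $g$ fibrewise as $(t,c)\mapsto(\psi_c(t),c)$ over the Cantor transversal and apply the Alexander trick simultaneously on all fibres, passing to a clopen refinement of $C$ for the vertical-structure-preserving case. You merely reparametrize the isotopy and spell out two points the paper leaves implicit (why $g$ preserves each slice, and the joint continuity at the collapse instant), both correctly.
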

 \begin{proof}  The proof is classical by using the Alexander's trick. We can assume that the  closure $\overline{B}$ of the box  reads $h^{-1}(D \times C)$ in a chart $h$, with $D$ a closed  ball in $\R^{d}$ of radius $r>0$ centered at the origin. Since the support of the map $g$ is in $B$,  the map $g$ preserves any slice of the box $B$. 
So, for any $c \in C$, let $g_{c }\colon D \to D$ be the map  defined by   $g(h^{-1}(t,c)) =h^{-1}(g_{c}(t), c)$ for $t \in D$.  
Now, for any $t\in [0, 1]$, let $F_{t} \colon D \times C \to D \times C$ be the map 
$$F_{t}(x, c) =\begin{cases} ((1-t) g_{c}(\frac{x}{1-t}), c) & \textrm{ if } \vert x \vert < r(1-t)  \\
(x,c) & \textrm{ if }  \vert x \vert \ge r(1-t). \end{cases}
$$

It is plain to check the map $ h^{-1}F_{t} h$ gives an isotopy between the identity and the map $g$. 

In the case where $g \in   \operatorname{Homeo}_{vsp}(\Omega)$, up to subdivide the clopen set $C$, we can assume that the map $g_{c}$ is independent of $c$. Hence the isotopy is also in  $\operatorname{Homeo}_{vsp}(\Omega)$.
 \end{proof}

Given two subsets $A \subset B$  of a topological space $X$, an {\em embedding} $f \colon A \to B$ is a continuous and injective map. This embedding  is {\em proper} if $f^{-1}(\partial B) = A \cap \partial B$.   The next theorem says that any proper embedding of a neighborhood of a compact set $K$ into a ball, sufficiently close to the identity, can be isotoped to an embedding which is the identity on $K$. Moreover the isotopy depends continuously of the embedding. This theorem,  true in any dimension, generalizes a version of the {\em Schoenflies Theorem}.

\begin{teo}\label{teo:EK}\cite{EK} Let  $D$ be a (closed or open) ball in $\R^{d}$, $K \subset  D$ a compact subset and $ U $ a neighborhood of $K$ in $ D$. Then, for any proper embedding  $f \colon U \to D$   close enough of the identity (for the $C^{0}$ topology), there exists  a continuous map
$H \colon U \times [0,1] \to D$ such that: 
\begin{itemize}
\item For any $t\in [0,1]$, $H(\cdot, t) \colon U \to D$ is a proper embedding.
\item $H(\cdot, 0) = f(\cdot)$ and   $H(\cdot, 1)_{\vert K} = Id_{\vert K}.$
\item There is a compact  neighborhood $K_{2}$ of $K$ in $U$, such that for any $t\in [0,1]$, $H(\cdot, t)_{\vert U \setminus K_{2}} = f(\cdot)_{\vert U\setminus K_{2}}$.
\item $H$ depends continuously on $f$ for the $C^{0}$ topology. 
\end{itemize}
\end{teo}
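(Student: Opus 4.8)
The statement is the Edwards--Kirby deformation theorem, and the only route I know of that works in arbitrary dimension is through the \emph{Kirby torus trick} together with the smoothing theory of Kirby--Siebenmann; I would follow that strategy. The plan is to first reduce the global assertion to a purely local one on $\R^d$, then convert the local problem into one on the $d$-torus, where a handle decomposition and the relevant straightening machinery are available, and finally unwrap and keep everything canonical in $f$ so as to obtain the continuous dependence.

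\textbf{Localization.} First I would cover $K$ by finitely many small cubes contained in $U$ and treat one cube at a time, composing the partial deformations. Using a collar of $\partial D$ to respect the properness condition together with a cutoff, it suffices to handle a model embedding $f$ of a neighborhood of a cube $Q$ that is $C^{0}$-close to the identity, and to produce an isotopy, fixed outside a slightly larger cube $Q_{2}$, ending at a map that is the identity on $Q$. The compact set $K_{2}$ of the statement is then assembled from these enlarged cubes, and the ``depends continuously on $f$'' clause is preserved because every construction below is canonical in $f$.

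\textbf{Torus trick.} Fix a standard immersion of the punctured torus $\mathbb{T}^{d}\setminus\{\ast\}$ into $\R^{d}$ whose image contains $Q_{2}$. Pulling $f$ back along this immersion and exploiting that $f$ is close to the identity, I would wrap the local data up into a homeomorphism $\widehat{f}$ of $\mathbb{T}^{d}$ that is close to the identity and agrees with the identity near the puncture. Since $\mathbb{T}^{d}$ carries a handle decomposition, the Kirby--Siebenmann product structure theorem shows $\widehat{f}$ is concordant to a diffeomorphism, and concordance-implies-isotopy lets me isotope $\widehat{f}$ to the identity through controlled homeomorphisms. Unwrapping this torus isotopy along the immersion and damping it out beyond $Q_{2}$ via the collar yields the desired $H$ on $\R^{d}$; continuity in $f$ survives because smoothing theory delivers the isotopy continuously (the theorem is, in essence, the assertion that the relevant embedding space is locally contractible, $H$ being the canonical contraction).

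\textbf{Main obstacle.} The hard part is precisely the smoothing input, which is where the dimension enters: the handle-straightening argument needs $d\ge 5$. For $d\le 3$ the conclusion is classical (Alexander's trick, the Schoenflies theorem, and Moise--Bing in dimension three), while $d=4$ must be treated separately using Quinn's work on $4$-manifold topology. A second delicate point, the one genuinely used in this paper, is the \emph{parametrized} refinement: one must arrange simultaneously that $H$ depends continuously on $f$ and that $H(\cdot,t)$ is identically $f$ off the fixed compact set $K_{2}$, uniformly in $f$. Keeping the torus construction canonical enough to descend to such a continuous, support-controlled section is the technical heart of \cite{EK}.
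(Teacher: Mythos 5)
The paper offers no proof of this statement: it is quoted from Edwards and Kirby \cite{EK} and used as a black box, so there is no internal argument to compare yours against. Your outline does reproduce the broad architecture of the actual Edwards--Kirby proof --- localization to cubes/handles with a cutoff respecting properness, the immersion of the punctured torus, wrapping up, unwrapping, and the observation that canonicity of every step is what yields continuous dependence on $f$ (the theorem is indeed a local contractibility statement for the space of codimension-zero embeddings).

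However, the step you single out as the main obstacle is exactly where your account goes wrong, and in a way that matters for this paper. Edwards and Kirby do \emph{not} straighten the wrapped-up homeomorphism $\widehat f$ of $\mathbb{T}^{d}$ by invoking the Kirby--Siebenmann product structure theorem and concordance-implies-isotopy; that route would force $d\ge 5$ (with $d=4$ handled by Quinn's work, which postdates \cite{EK} by more than a decade and so cannot be an input to it), and it would be fatal here, since the present paper needs the theorem for every $d\ge 1$ --- it is applied with $d=1$ in the final section, and the authors explicitly stress that the result is ``true in any dimension''. The whole point of \cite{EK} is that no surgery or smoothing theory is required: since $\widehat f$ is homotopic to the identity and $C^{0}$-close to it, its lift $F$ to the universal cover $\R^{d}$ is a \emph{bounded} homeomorphism, $\sup_{x}\lvert F(x)-x\rvert<\infty$, and a bounded homeomorphism of $\R^{d}$ is canonically isotopic to the identity by the dilation trick $F_{t}(x)=t^{-1}F(tx)$ for $t\in[1,\infty)$, which converges uniformly, together with its inverses, to the identity. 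Restricting this canonical isotopy to a fundamental domain and pushing it back through the immersion is what straightens the handle. Replacing your smoothing-theoretic step by this bounded-isotopy argument removes all dimension restrictions and yields the theorem as stated.
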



 Applied in our context, a first consequence is that any map close to the identity can be interpolated by a map with a support in a box.
 \begin{lemma}\label{lem:2_7}
Let $\Omega$ be a tilable lamination, let $B$ be a box of ball type and  $B'$ be a box with closure included in $B$. Then there exists an $\varepsilon >0$ such that for any homeomorphism $f \in   \operatorname{Homeo}_{\cL}(\Omega)$ (resp.  $\operatorname{Homeo}_{vsp}(\Omega)$) with a displacement smaller than $\varepsilon$, there exists a map $g \in \operatorname{Homeo}_{\cL}(\Omega)$  (resp.  $\operatorname{Homeo}_{vsp}(\Omega)$) with $\supp g \subset B$ such  that $g_{\vert B'} = f_{\vert B'}$. 
Moreover, the displacement of $g$ depends continuously on $f$ for the $C^0$-topology.
\end{lemma}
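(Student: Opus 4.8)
The plan is to transport the problem into the chart of $B$ and to apply the Edwards--Kirby theorem (Theorem~\ref{teo:EK}) slice by slice. First I would fix a chart realizing $\overline{B}$ as $h^{-1}(\overline D\times C)$ with $D$ an open ball centered at the origin; since $B$ is internal, this chart extends over a slightly larger open ball $\widehat D\supset\overline D$, leaving room near $\partial D$. By Theorem~\ref{teo:strhomeo} we may write $f(\omega)=\omega-\Phi_f(\omega)$, and in these coordinates $h^{-1}(t,c)=c-t$ with the flow acting by $T(s,h^{-1}(t,c))=h^{-1}(t+s,c)$. Hence, as soon as the displacement is small enough that translating by $\Phi_f$ keeps points inside $\widehat D$, the map $f$ preserves each slice and reads there as a family of proper embeddings $f_c\colon U\to\widehat D$, $f_c(t)=t+\Phi_f(h^{-1}(t,c))$, all $C^0$-close to the identity uniformly in $c$, the closeness being controlled by $\|\Phi_f\|_\infty$.

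Next I would fix nested balls $K_0\subset D_1\subset\overline{D_1}\subset D_2\subset\overline{D_2}\subset D$, where $K_0$ is the projection of $\overline{B'}$ onto $D$, set $U=\widehat D\setminus\overline{D_1}$, and let $K=\overline D\setminus D_2$ be a closed collar near $\partial D$. For $\varepsilon$ small each $f_c|_U$ is a proper embedding close enough to the identity for Theorem~\ref{teo:EK} to apply, producing an isotopy $H_c$ with $H_c(\cdot,0)=f_c$, $H_c(\cdot,1)=\mathrm{Id}$ on $K$, and $H_c(\cdot,1)=f_c$ off a compact neighbourhood $K_2$ of $K$ in $U$. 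Since $K_2\subset U$ is disjoint from the compact set $\overline{D_1}$, the formula
\[ g_c:=\begin{cases} f_c & \text{on }\overline{D_1},\\ H_c(\cdot,1) & \text{on }\widehat D\setminus\overline{D_1},\end{cases} \]
is consistent (indeed $H_c(\cdot,1)=f_c$ near $\partial D_1$, as $K_2$ is at positive distance from $\overline{D_1}$) and defines a homeomorphism of $\widehat D$ that is the identity near $\partial D$ and equals $f_c$ on $K_0$. Assembling the $g_c$ over $c\in C$ and extending by the identity outside $B$ yields $g$; by construction $\supp g\subset B$, $g|_{B'}=f|_{B'}$, and $g$ preserves slices, hence leaves, so $g\in\Homeo_\cL(\Omega)$.

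For the continuous dependence I would invoke the final clause of Theorem~\ref{teo:EK}: $H_c$, and therefore $g_c$, depends continuously on $f_c$ in the $C^0$-topology; as $f\mapsto(c\mapsto f_c)$ is continuous and the data $\widehat D,D_1,D_2,U$ are fixed, the assignment $f\mapsto g$, hence $f\mapsto\Phi_g$, is $C^0$-continuous. In the case $f\in\Homeo_{vsp}(\Omega)$ the displacement $\Phi_f$ is transversally locally constant, so $C$ splits into finitely many clopen pieces on which $f_c$ does not depend on $c$; carrying out the construction piecewise produces a $g$ whose displacement is again transversally locally constant, i.e. $g\in\Homeo_{vsp}(\Omega)$.

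The main obstacle is upgrading the single-embedding statement of Theorem~\ref{teo:EK} to a construction coherent over the whole Cantor transversal: one must check that a single threshold $\varepsilon$, depending only on the fixed data $B,B'$ (up to the bounded bi-Lipschitz distortion of the chart on the compact $\overline B$), works uniformly in $c$, and that the slice-wise homeomorphisms $g_c$ glue into a map that is continuous in the transverse direction and genuinely bijective on $\Omega$. The continuity-in-$f$ clause of Edwards--Kirby is exactly what secures this uniformity, while choosing the collar and nesting so that $K_2$ avoids $\overline{D_1}$ is the delicate bookkeeping that makes the gluing consistent.
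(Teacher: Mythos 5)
Your proof follows essentially the same route as the paper's: conjugate into the chart, apply the Edwards--Kirby deformation theorem slice by slice on an annular neighbourhood so as to interpolate between $f$ near $B'$ and the identity near the boundary of $B$, glue the pieces, extend by the identity, and handle the vertical-structure-preserving case by a clopen subdivision of $C$. The only (cosmetic) divergence is that you straighten a collar of $\partial D$ itself inside an enlarged ball $\widehat D$, whereas the paper straightens a collar of an intermediate sphere $\partial D_2$ strictly inside the ball and sets the map to the identity on the outer annulus; if you keep your version, just insert one more intermediate ball so that $U$ stays at positive distance from $\partial\widehat D$ and hence $f_c(U)\subset\widehat D$ for $\varepsilon$ small.
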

\begin{proof} Without loss of generality, we may assume that $B$ reads $h^{-1} (D_{3} \times C)$ in a chart $h$ with $D_{3}$ a ball in $\R^{d}$. Assume that $D_{1}\times C$ is a compact neighborhood of $h(B')$ with  $D_{1} \subset  D_{3}$  a compact subset and let $D_{2} \subset D_{3}$ be  a neighborhood of $D_{1}$.   

We consider a map $f \in  \operatorname{Homeo}_{\cL}(\Omega)$ with a displacement smaller than $\varepsilon$ (defined later). By continuity, for a small enough $\varepsilon$, the set $f(h^{-1}(D_{2}\times C))$ is in $B$, and for any $c \in C$, $f(h^{-1}(D_{2} \times \{c\})) \subset  h^{-1}(D_{3} \times \{c\})$.
For any $c \in C$, let $f_{c} \colon D_{2} \to D_{3}$ be the embedding defined by  $f_{c} (\cdot) = (h f h^{-1})(\cdot, c)$ and let $K$ be a compact neighborhood of $\partial D_{2}$ proper  in $U =D_{2} \setminus D_{1}$.
For an $\varepsilon$ small enough and for any  $c\in C$,  Theorem \ref{teo:EK}  applied to the maps $f_{c} \colon U \to D_{3}$, gives us  embeddings $h_{c}  =H_{c}(\cdot, 1) \colon U \to D_{3}$. We define then the maps $\bar{f_{c}} \colon D_{3} \to D_{3}$ by $\bar{f_{c}}_{\vert U} := h_{c}$ and   $\bar{f_{c}}_{\vert D_{1} } :=f_{c \vert D_{1}}$ and  $\bar{f_{c}}_{\vert D_{3}\setminus D_{2}} :=Id_{\vert D_{3}\setminus D_{2}} $
Let $\bar{f} \colon B \to B$ be the map defined by $\bar{f} \circ h^{-1} (t,c) = h \circ (\bar{f_{c}}(t), c)$ for any $(t,c) \in D_{3} \times C$. By construction,  $\bar{f}$ is a homeomorphism, $\bar{f}_{\vert h^{-1}(D_{1} \times C) }    = {f}_{\vert h^{-1}(D_{1} \times C) }$ and $\bar{f}_{\vert \partial B} = Id_{\vert \partial B}$. So $\bar{f}$ can be extended  by the identity to all the tilable lamination $\Omega$ to define a homeomorphism. This gives the map $g$.      

Here again, when $f \in  \operatorname{Homeo}_{vsp}(\Omega)$, up to subdividing the clopen set $C$, we can assume that the map $f_{c}$ is independent of $c$. So the same is true for $\bar{f}$ and it belongs to  $\operatorname{Homeo}_{vsp}(\Omega)$. 
\end{proof}

\begin{prop}\label{prop:presque-partition}
Let $\Omega$ be a minimal tilable lamination and $\mathcal B = \{B_{i}\}_{i=1}^k$ be a box cover  of $\Omega$. Then, there are $\varepsilon >0$ and an integer $\ell>0$ such that for every $f \in \operatorname{Homeo}_{\cL}(\Omega)$ (resp. in  $\operatorname{Homeo}_{vsp}(\Omega)$) with displacement smaller than $\varepsilon$, there exists a decomposition $f = g_{1}\cdots g_{\ell}$ with $g_{i} \in \operatorname{Homeo}_{\cL}(\Omega)$ (resp.  $\operatorname{Homeo}_{vsp}(\Omega)$)   and $\supp g_{i} \subset B_{j(i)}$.
\end{prop}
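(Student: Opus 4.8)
The plan is to follow Fisher's skeleton induction, straightening $f$ one vertical-boundary dimension at a time by means of Lemma~\ref{lem:2_7} (i.e.\ Theorem~\ref{teo:EK}). Throughout, let $G$ denote $\operatorname{Homeo}_{\cL}(\Omega)$ or $\operatorname{Homeo}_{vsp}(\Omega)$; since $\Omega$ is minimal, Theorem~\ref{teo:strhomeo} equips each $f\in G$ with a well-defined displacement $\Phi_f$, so that the hypothesis $\norm{\Phi_f}_\infty<\varepsilon$ is meaningful. First I would refine $\mathcal B$ to a subordinate cubic decomposition: around each point choose a small cubic box contained in some $B_i$, extract by compactness a finite cubic box cover of $\Omega$ refining $\mathcal B$, and apply Lemma~\ref{lem:BBG} to obtain a cubic box decomposition $\mathcal B'=\{B'_i\}$ with each $B'_i\subset B_{j(i)}$. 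Taking the cubes of $\mathcal B'$ of diameter small against a Lebesgue number of $\mathcal B$, I can moreover guarantee, for every vertical boundary $\sigma$ of $\mathcal B'$, a ball-type box that is a neighborhood of $\sigma$ and still lies inside a single cover box. Write $\Sigma_k$ for the $k$-skeleton of $\mathcal B'$ and $\Sigma^{(k)}=\Sigma_0\cup\cdots\cup\Sigma_k$, so that $\Sigma^{(d-1)}$ is the union of the frontiers of all the boxes $B'_i$.

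The core is an induction producing maps $f_{-1}=f,f_0,\dots,f_{d-1}\in G$ and box-supported correctors $c_0,\dots,c_{d-1}$ with $f_k=c_k^{-1}f_{k-1}$, such that $f_k$ is the identity on a neighborhood of $\Sigma^{(k)}$. Suppose $f_{k-1}=\mathrm{id}$ on an open neighborhood $N$ of $\Sigma^{(k-1)}$. Since the frontier of each open $k$-cell $\sigma$ lies in $\Sigma^{(k-1)}\subset N$, the sets $\overline{\sigma}\setminus N$ are compact and pairwise disjoint; for each I apply Lemma~\ref{lem:2_7} to a pair $\overline{B'_\sigma}\subset B_\sigma$, where $B_\sigma$ is a ball-type box inside a cover box and is a neighborhood of $\overline{\sigma}\setminus N$ with $B'_\sigma\supset\overline{\sigma}\setminus N$, obtaining $g_\sigma\in G$ with $\supp g_\sigma\subset B_\sigma$ and $g_\sigma=f_{k-1}$ on $B'_\sigma$. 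The boxes $B_\sigma$ are pairwise disjoint, so $c_k:=\prod_\sigma g_\sigma$ is unambiguous, each factor has support in a single cover box, and $f_k:=c_k^{-1}f_{k-1}$ is the identity on $B'_\sigma$ for every $k$-cell; choosing the $B_\sigma$ to avoid a smaller neighborhood of $\Sigma^{(k-1)}$ keeps $f_k=\mathrm{id}$ there, so that after the usual collar adjustment $f_k=\mathrm{id}$ on a neighborhood of $\Sigma^{(k)}$.

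When $k=d-1$ the map $f_{d-1}$ is the identity on a neighborhood of the full boundary skeleton $\Sigma^{(d-1)}$, hence $\supp f_{d-1}$ lies in the disjoint union of the open $d$-cells $B'_i$. Restricting $f_{d-1}$ to each $\overline{B'_i}$ and extending by the identity factors it as $f_{d-1}=\prod_i r_i$ with $\supp r_i\subset B'_i\subset B_{j(i)}$. Unwinding $f_{k-1}=c_k f_k$ then gives $f=c_0c_1\cdots c_{d-1}\,f_{d-1}$, a product of the finitely many box-supported maps $g_\sigma$ and $r_i$; their number is bounded by the total number of faces of $\mathcal B'$, which is the integer $\ell$ of the statement and depends only on $\mathcal B$. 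For the $\operatorname{Homeo}_{vsp}(\Omega)$ version one subdivides, before each use of Lemma~\ref{lem:2_7}, the relevant clopen set so that the correction is constant along the vertical, exactly as in that lemma; then every factor lies in $\operatorname{Homeo}_{vsp}(\Omega)$.

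The main obstacle is quantitative: Lemma~\ref{lem:2_7} applies to $f_{k-1}$ only when its displacement lies below a threshold attached to the box $B_\sigma$, while each corrector may inflate the displacement passed to the next level. Because $\mathcal B'$ and the finite family of boxes $B_\sigma$ with their thresholds are fixed before $\varepsilon$ is chosen, I would invoke the continuous dependence of $g$ on $f$ asserted in Lemma~\ref{lem:2_7} to bound $\norm{\Phi_{f_k}}_\infty$ by a quantity tending to $0$ with $\norm{\Phi_f}_\infty$; a single small enough $\varepsilon$ then legitimizes all $d$ steps simultaneously. The remaining delicacy is purely geometric: one must arrange the nested neighborhoods and the trimmed cells $\overline{\sigma}\setminus N$ so that $f_k$ is genuinely the identity on a neighborhood of all of $\Sigma^{(k)}$ and not merely of the open $k$-cells. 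This is the standard collar bookkeeping of Fisher's argument, made possible by the third property in Theorem~\ref{teo:EK}, which forces each $g_\sigma$ to coincide with $f_{k-1}$ off a compact core of $B_\sigma$.
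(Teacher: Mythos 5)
Your proof is correct and follows essentially the same route as the paper: refine $\mathcal B$ to a cubic box decomposition via Lemma~\ref{lem:BBG}, then perform Fisher's induction over the $k$-skeleta, using Lemma~\ref{lem:2_7} on pairwise disjoint boxes around the (trimmed) vertical boundaries and the continuous dependence of the correction on $f$ to choose a single $\varepsilon$ valid for all $d$ steps. The final factorization of $f_{d-1}$ over the $d$-cells and the treatment of the $\operatorname{Homeo}_{vsp}$ case also coincide with the paper's argument.
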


\begin{proof} Let  $\cB' =\{B'_0,B'_1,\ldots,B'_m\}$ be the box decomposition of cubic type given by Lemma \ref{lem:BBG}. Up to subdividing  each box $B'_{i}$ into smaller boxes, we can assume that the closure of every  box $B'_{i}$ is included in a box $B_{j(i)}$. We will construct, by induction on $0 \le i \le d$, a homeomoprhism $f_{i}$ which equals the identity on a neighborhood of the $i$-skeleton of $\cB'$ and equals $f$ outside. At each step, we use Lemma \ref{lem:2_7} to approximate $f_{i-1}$ by maps with support in a small box.  

For any $0$-vertical boundary $V$ of a box $B'_{i}$, let $B^{(0)}_{V}$ be a box containing $V$ in its interior and included in a box $B_{j(i)}$.  Let $B^{(0)}_{1}, \ldots, B^{(0)}_{n}$ be the collection of  these boxes containing all the $0$-vertical boundaries. Up to refine the boxes $B^{(0)}_{i}$, we can assume that they are pairwise disjoint.  The union of all theses boxes $B^{(0)}_{i}$ covers the 0-skeleton of $\cB'$.

\emph{Step 0.}  Applying Lemma \ref{lem:2_7} to any box $B^{(0)}_{i}$  and  any neighborhood of the $0$-vertical $V \cap B^{(0)}_{i}$ (when not empty),  we get, for an $\varepsilon$ small enough, a $g_{i} \in \operatorname{Homeo}_{\cL}(\Omega)$ with $\supp g_{i} \subset B^{(0)}_{i}$ such that $g_{i}=f$ on a neighborhood of $V \cap B^{(0)}_{i}$.  It follows that the maps $g_{1}, \ldots, g_{n}$ commute;  and $ f_{0 } =g_{1}^{-1} \circ \cdots \circ g_{n}^{-1} \circ f$ is the identity in a neighborhood $U_{0}$ of the $0$-skeleton of $\cB'$. 
Moreover the displacement of $f_{0}$ continuously depends on the displacement of $f$.

\emph{ Step i.} $1 \le i \le d-1$. Let us assume that $f_{i} \in \operatorname{Homeo}_{\cL}(\Omega)$ equals the identity  on a neighborhood $U_{i-1}$ of the $i-1$-skeleton of $\cB'$.      We do the same as for the former step.  Let  $B^{(i)}_{1}, \ldots, B^{(i)}_{n_{i}}$ be a collection of boxes such that  any $i$-vertical boundary $V$ of  $\cB'$  is in the interior of a box $B^{(i)}_{j} \subset B_{t(j)}$. Up to  refine the boxes $B^{(i)}_{j}$, we may assume that the sets $B^{(i)}_{j} \setminus U_{i-1}$ are pairwise disjoint. 
Applying Lemma \ref{lem:2_7} to any box $B^{(i)}_{j}$  and  to a neighborhood  of the $i$-vertical $(V\setminus U_{i-1}) \cap B^{(i)}_{j}$ (when not empty),  we have, for an $\varepsilon$ small enough, a $g^{(i)}_{j} \in \operatorname{Homeo}_{\cL}(\Omega)$ with $\supp g^{(i)}_{j} \subset B^{(i)}_{j}$ such that $g^{(i)}_{j}=f$ on a neighborhood of $(V\setminus U_{i-1})  \cap B^{(i)}_{j}$.  We get that the maps $g^{(i)}_{1}, \ldots, g^{(i)}_{n_{i}}$ commute;  and $ f_{i } =(g_{1}^{(i)})^{-1} \circ \cdots \circ (g_{n_{i}}^{(i)})^{-1} \circ f_{i-1}$ is the identity in a neighborhood  of the $i$-skeleton.

Hence the homeomorphism $f_{d-1}$ preserves each box $B'_{i}$, and  $f_{d-1}$ can be written as  the composition of homeomorphisms with support in each box of the decomposition $\cB'$.   Moreover if $f \in  \operatorname{Homeo}_{vsp}(\Omega)$, then $f_{d-1} \in  \operatorname{Homeo}_{vsp}(\Omega)$ 
also. This proves the proposition.
\end{proof}

The following proposition shows the assertions (1) and (2) of Theorem \ref{teo:main}.
\begin{prop}\label{prop:connexite}
Let $\Omega$ be a minimal  tilable lamination. Then, $D(\Omega)$ is open in $\operatorname{Homeo}_{\cL}(\Omega)$ and 
$D(\Omega) = \operatorname{Homeo}_{\cL}^0(\Omega)$. 
Similarly, $D_{vsp}(\Omega)$ is open in  $\operatorname{Homeo}_{vsp}(\Omega)$ and $D_{vsp}(\Omega) = \operatorname{Homeo}_{vsp}^0(\Omega)$.
\end{prop}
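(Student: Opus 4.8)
The plan is to reduce everything to producing a single $C^{0}$-open neighborhood of the identity contained in $D(\Omega)$ (resp.\ in $D_{vsp}(\Omega)$), and then to run a standard topological-group argument. Recall that $D(\Omega)$ and $D_{vsp}(\Omega)$ are path-connected subgroups lying inside $\Homeo_{\cL}(\Omega)$ and $\Homeo_{vsp}(\Omega)$ respectively, since a deformation is homotopic to the identity and hence leaf-preserving by Theorem \ref{teo:strhomeo}. If I can show that such a subgroup $H$ contains a $C^{0}$-neighborhood of the identity, then $H$ is open; an open subgroup is automatically closed, being the complement of the union of its other cosets (each a homeomorphic copy of the open set $H$); so $H$ is clopen, and a clopen connected set containing the identity must coincide with the connected component of the identity. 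This yields at once that $D(\Omega)$ is open and that $D(\Omega)=\Homeo^{0}_{\cL}(\Omega)$, and likewise for the vertical-structure-preserving version. Thus the whole proposition follows once the neighborhood is found in each of the two settings.

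To build the neighborhood I would first fix, using compactness of $\Omega$, a finite box cover $\{B_{i}\}_{i=1}^{k}$ all of whose members are of \emph{ball type}. Proposition \ref{prop:presque-partition} then furnishes an $\varepsilon>0$ and an integer $\ell$ such that every $f\in\Homeo_{\cL}(\Omega)$ (resp.\ $\Homeo_{vsp}(\Omega)$) of displacement smaller than $\varepsilon$ factors as $f=g_{1}\cdots g_{\ell}$ with each $g_{i}$ supported in one of the ball-type boxes $B_{j(i)}$. By Lemma \ref{lem:isot-close-id} each such $g_{i}$ is a deformation (resp.\ lies in $D_{vsp}(\Omega)$), and since $D(\Omega)$ (resp.\ $D_{vsp}(\Omega)$) is a group, $f$ itself is a deformation. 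Hence the set $\{f : \text{the displacement of } f \text{ is} <\varepsilon\}$ is contained in $D(\Omega)$ (resp.\ in $D_{vsp}(\Omega)$), and it only remains to check that this set contains a $C^{0}$-ball around the identity.

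This last point is where I expect the real difficulty to lie, and it is the step I would treat most carefully. The tractable half is a compactness argument: if a sequence $f_{n}\to Id$ in the $C^{0}$-topology had displacements $\Phi_{f_{n}}$ bounded but not tending to $0$, one could choose $\omega_{n}$ with $\Phi_{f_{n}}(\omega_{n})$ bounded away from $0$, pass to convergent subsequences $\omega_{n}\to\omega_{*}$ and $\Phi_{f_{n}}(\omega_{n})\to t_{*}\neq 0$, and use $d(f_{n}(\omega_{n}),\omega_{n})\to 0$ together with freeness of the translation flow to force $\omega_{*}-t_{*}=\omega_{*}$, a contradiction; connectedness of $\Omega$ (which holds since $\Omega$ is minimal) then lets one apply the intermediate value theorem to $|\Phi_{f_{n}}|$ and reduce to this bounded case whenever the displacement is not uniformly large. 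The genuinely delicate phenomenon is that, on a non-expansive minimal lamination, the displacement need \emph{not} be a $C^{0}$-continuous function of $f$: recurrence of the flow produces homeomorphisms $C^{0}$-close to the identity yet with uniformly large displacement (for instance time-$c$ maps $T(c,\cdot)$ for large near-return times $c$). Such maps are not reachable through the partition of Proposition \ref{prop:presque-partition}, so they must be recognized as deformations directly: the time-$t$ maps of the flow are deformations via $s\mapsto T(st,\cdot)$, and the idea is to absorb the ``large'' part of the displacement of a $C^{0}$-small $f$ into such a flow deformation, leaving a factor of small displacement to which the partition applies. Carrying out this splitting continuously and globally is the main obstacle; once it is in place, the three assertions follow from the clopen-plus-connected argument of the first paragraph, and the vertical-structure-preserving case is obtained verbatim from the ``resp.'' versions of Lemma \ref{lem:isot-close-id} and Proposition \ref{prop:presque-partition}.
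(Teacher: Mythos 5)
Your core argument is the paper's own: cover $\Omega$ by boxes of ball type, invoke Proposition \ref{prop:presque-partition} to factor every $f$ of sufficiently small displacement into homeomorphisms supported in ball-type boxes, apply Lemma \ref{lem:isot-close-id} (Alexander's trick) to see that each factor is a deformation, conclude that all small-displacement maps lie in $D(\Omega)$ (resp.\ $D_{vsp}(\Omega)$), and finish with the standard open-subgroup-is-clopen argument plus connectedness. Apart from the cosmetic difference that the paper first refines the cover by a Lebesgue-number argument before applying Proposition \ref{prop:presque-partition}, this is exactly the published proof.

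The step you isolate as the ``main obstacle'' --- passing from ``every $f$ with $\Vert\Phi_f\Vert_\infty<\varepsilon$ lies in $D(\Omega)$'' to ``$D(\Omega)$ contains a $C^0$-neighborhood of the identity'' --- is precisely the step the paper does not discuss: its proof ends with ``this means that the identity lies in the interior of $D(\Omega)$,'' which tacitly treats the set of small-displacement maps as a $C^0$-neighborhood of the identity. Your objection is genuine: on the dyadic solenoid (a minimal, non-expansive tilable lamination explicitly within the paper's scope) the translations $T(2^n,\cdot)$ converge to the identity in the metric $\delta$ while their displacements are the constants $2^n$, so $\{f:\Vert\Phi_f\Vert_\infty<\varepsilon\}$ is not a neighborhood of the identity, and your compactness/intermediate-value dichotomy only reduces the problem to the case where $|\Phi_f|$ is uniformly large, without eliminating it. Since you explicitly leave unproved the claim that this large part can be absorbed into a flow deformation (say $f=T(c,\cdot)\circ g$ with $c$ a single near-return vector and $g$ of small displacement), your write-up is incomplete exactly at the point you flag; but you should know that you have located a real lacuna in the published argument rather than manufactured a difficulty --- in the expansive case it evaporates via Proposition \ref{lem:clopenhomeo}, and in general the missing content is a proof that a leaf-preserving homeomorphism $C^0$-close to the identity has displacement uniformly close to one such vector $c$. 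Supplying that argument is what would turn your proposal (and the paper's proof) into a complete one.
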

\begin{proof} Let $\cB= \{B_{i}\}_{i=1}^{k}$ be a box cover of ball type of $\Omega$. Let $\rho$ be the Lebesgue number of $\cB$  and consider an atlas $\cB'=\{B'_{j}\}_{j=1}^{\ell}$  of $\Omega$ such that the diameter of any box $B'_{j}$ is smaller than $\rho$. Then by Proposition \ref{prop:presque-partition}, any map $f \in \operatorname{Homeo}_{\cL}(\Omega)$  with displacement small enough, can be written as a product of maps $g_{i} \in \operatorname{Homeo}_{\cL}(\Omega)$ with support in a $B'_{j(i)}$. Thus by Lemma \ref{lem:isot-close-id} we get  that any $g_{i}$ is in $D(\Omega)$, and finally $f \in D(\Omega)$. This means that the identity lies in the interior of $D(\Omega)$. Standard arguments on topological groups show then that  $D(\Omega)$ is open and closed in $\operatorname{Homeo}_{\cL}(\Omega)$ and $D(\Omega) = \operatorname{Homeo}_{\cL}^0(\Omega)$. The proof is similar for the group $\operatorname{Homeo}_{vsp}(\Omega)$.
\end{proof}

Finally, we can obtain the proof of Proposition \ref{prop:partition}.
\begin{proof}[of Proposition \ref{prop:partition}] Let $H$ be either  $ \operatorname{Homeo}_{\cL}^0(\Omega)$  or $\operatorname{Homeo}_{vsp}^0(\Omega)$ and let $\cB$ be an atlas of $\Omega$. Proposition \ref{prop:connexite} gives us a constant $\eta>0$ such that any element of $\operatorname{Homeo}_{\cL}(\Omega)$ (resp. in $\operatorname{Homeo}_{vsp}(\Omega)$) with a displacement smaller than $\eta$ is in $D(\Omega)$ (resp. $D_{vsp}(\Omega)$).  Up to refining the covering $\cB$, we can assume that every element of $\cB$  has a diameter smaller than $\eta$.
Since the group $H$ is connected, it is enough to show the partition property for any $f \in H$ with an arbitrary small displacement $\varepsilon$. By taking  the $\varepsilon$ given by  Proposition \ref{prop:presque-partition}, we can write $f$ as a product of elements  $g_{i}$ in $\operatorname{Homeo}_{\cL}(\Omega)$ with displacement smaller than $\eta$. By Proposition \ref{prop:connexite}, we get $g_{i} \in H$ and this shows the partition property. 
\end{proof}

\section{Simplicity of the commutator subgroup}\label{sec:simple}
Epstein's result \cite{E} asserts that if a group of homeomorphism is factorizable and acts transitively on open sets, 
then its commutator subgroup is simple. Let $\Omega$ be a minimal tilable lamination and let $G$ 
be a subgoup of $\Homeo_{\cL}^{0}(\Omega)$. In general, we cannot expect $G$ to act transively on open sets. 
We need to replace the transivity condition with another one which is more adapted to laminated spaces. Thus, we give here, a sufficient condition on a  subgroup of $\operatorname{Homeo}_{\cL}^{0}(\Omega)$ so that the derived subgroup is simple. 
\begin{teo}\label{teo:simple}
Let $\Omega$ be a minimal $\R^{d}$ tilable lamination. Let $G \subset  \operatorname{Homeo}_{\cL}^0(\Omega)$ be a group such that:
\begin{itemize}
\item[i)] $G$ satisfies the partition property.
\item[ii)] For any boxes $B_{1} =C -D_{1} , B_{2} = C-D_{2}$ of ball type whose closures lie in a box $B =C-K$, there exists a $g \in G'$ such that $B_{2} \subset g(B_{1})$. 
\end{itemize}
Then the derived group $G'=[G,G]$ is simple. 
\end{teo}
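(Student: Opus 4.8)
The plan is to follow the classical Epstein strategy, adapted to the laminated setting so that the transitivity hypothesis is replaced by condition (ii). Let $N \trianglelefteq G'$ be a nontrivial normal subgroup; I want to show $N = G'$. Since $G'$ is generated by commutators $[a,b]$ with $a,b \in G$, and since by the partition property every element of $G$ factors into homeomorphisms supported in boxes, it suffices to show that $N$ contains every element of $G$ whose support lies in a single box of ball type, and in fact that $N$ contains every commutator $[a,b]$ built from such localized generators. So the first step is to reduce the problem, via hypothesis (i), to statements about homeomorphisms supported in small boxes of ball type.

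The heart of the argument is a \emph{displacement trick}. Pick a nontrivial $h \in N$; then there is a point $x$ and a box $B = C - K$ of ball type with $h(\overline{B}) \cap \overline{B} = \emptyset$ (choose $B$ small enough around a point genuinely moved by $h$, using that $\supp h \neq \emptyset$ and continuity). Now take any $g \in G'$ with $\supp g \subset B$. The standard commutator identity gives
\[
[h, g] = h g h^{-1} g^{-1},
\]
and since $h g h^{-1}$ is supported in $h(B)$, which is disjoint from $B \supset \supp g$, the two factors $hgh^{-1}$ and $g^{-1}$ commute after localization; one checks that $[h,g] = (h g h^{-1}) g^{-1} \in N$ equals a conjugate-type expression that recovers $g$ up to an element supported in $h(B)$. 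The upshot of this classical computation is that $N$ contains, for every $g \in G'$ supported in $B$, an element agreeing with $g$ on $B$; by manipulating two disjoint copies one extracts $g$ itself. Thus \textbf{every} element of $G'$ supported in a suitable small box of ball type lies in $N$.

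The role of hypothesis (ii) is then to propagate this from one box to all boxes: given two boxes $B_1, B_2$ of ball type inside a common box $B$, there is $\phi \in G'$ with $B_2 \subset \phi(B_1)$, and conjugation by $\phi$ sends homeomorphisms supported in $B_1$ to homeomorphisms supported in $\phi(B_1) \supset B_2$. Since $N$ is normal in $G'$ and $\phi \in G'$, this conjugation preserves $N$; combined with a chain of overlapping boxes of ball type covering $\Omega$ (minimality guarantees we can connect any two boxes through such a chain), every element of $G'$ supported in any box of ball type lies in $N$. Finally, applying the partition property to an arbitrary $\gamma \in G'$ — writing it as a product of factors each supported in a box of ball type — yields $\gamma \in N$, hence $N = G'$, proving simplicity.

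I expect the main obstacle to be the displacement computation and its localization: one must arrange a box $B$ of ball type that is \emph{genuinely} displaced by the chosen nontrivial $h \in N$ while still lying inside a chart where the commutator manipulation is valid, and one must verify carefully that the element of $N$ produced really equals $g$ (not merely $g$ modulo something supported elsewhere). This requires choosing the box small enough that $h(\overline{B})$, $\overline{B}$, and the relevant conjugates are pairwise disjoint, and then using a second application of the commutator identity to cancel the unwanted factor — the standard "two disjoint translates" trick. The interplay between the ball-type requirement in (ii) and the cubic-type decompositions furnished by Lemma~\ref{lem:BBG} is a secondary technical point: I would pass freely between the two using that every box of cubic type contains, and is contained in, boxes of ball type, so that the partition property and condition (ii) can be applied to the same family of localized generators.
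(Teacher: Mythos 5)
Your overall architecture matches the paper's (an Epstein-style reduction, a displaced box, a commutator manipulation with the displacing element of $N$, propagation via hypothesis (ii), and the partition property at the end), but the central step as you state it does not work. From $h\in N$ with $h(B)\cap B=\emptyset$ and $g$ supported in $B$ you correctly get $[g,h]=g\,(hg^{-1}h^{-1})\in N$, an element agreeing with $g$ on $B$ up to a parasitic factor $hg^{-1}h^{-1}$ supported in $h(B)$. You then assert that ``by manipulating two disjoint copies one extracts $g$ itself,'' so that \emph{every} element of $G'$ supported in $B$ lies in $N$. This extraction is exactly what fails for a general $g$: the parasitic factor is $hg^{-1}h^{-1}$, and it lies in $N$ if and only if $g$ does (since $h\in N\subset G'$ and $N\trianglelefteq G'$), so any attempt to cancel it is circular. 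What the two-disjoint-supports trick actually proves is weaker, and is the content of Tsuboi's algebraic lemma used in the paper (Lemma \ref{lem:Ts2}): if $a,b\in G$ are both supported in $B$ and $n\in N$ displaces $B$, then the \emph{commutator} $[a,b]$ is a product of four conjugates of $n^{\pm1}$, hence lies in $N$. The cancellation works only because the element being produced is a commutator, not an arbitrary box-supported element of $G'$.

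This weaker statement also forces a different endgame from yours. Your last step applies the partition property to $\gamma\in G'$ to write it as a product of box-supported factors; but the partition property produces factors in $G$, not in $G'$, so even your (unproved) claim that box-supported elements of $G'$ lie in $N$ would not finish the proof. The paper instead uses the partition property to see that $G$ is generated by the subgroups $G_B$ of box-supported elements, invokes the standard fact that the derived subgroup is generated by conjugates of commutators of generators, and is thereby reduced to showing $[f_1,f_2]\in N$ for $f_1,f_2$ supported in boxes $B_1,B_2$ of a fine cover. When $B_1\cap B_2\neq\emptyset$, both lie in a common box of a coarser atlas, each box of which is displaced by some element of $N$ --- this is where minimality and hypothesis (ii) enter, to conjugate one displaced box around the whole lamination (your ``chain of overlapping boxes'' plays this role, but what must be propagated is the property of being displaced by an element of $N$, not your stronger containment claim) --- and Tsuboi's lemma concludes. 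If you replace your extraction step by Tsuboi's lemma and rework the reduction accordingly, your argument becomes the paper's.
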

\begin{proof} Let $N$ be a non-trivial normal subgroup of $G'$. We have to show that $N =G'$. 

\begin{lemma}\label{l.good_cover}
There exists an atlas $\cB$ of the solenoid $\Omega$ such that for every box $B$ of $\cB$ there is 
a map $n_{B} \in N$ such that ${B} $ and $n_{B}({B})$ are disjoint.  
\end{lemma}

\begin{proof} Let $Id\neq n\in N $. There is a box $B_{0}= C_{0}-D_{0}$ of ball type such that $B_{0}$ and $n(B_{0})$ are disjoint.
Since the translation flow is free and minimal and translations have the vertical structure preserving property, it follows that  for every  $x \in \Omega$, there is a clopen subset $C_{x} \subset C_{0}$ and an open  ball $D_{x}\subset \R^{d}$ with $D_{0} \subset D_{x}$ such that $C_{x}-D_{x}$ is a box of $\Omega$ containing the point $x$. 

Let $\tilde{B}_{x}$ be a box included in $C_{x}-D_{x}$ containing the point $x$.  By hypothesis ii), there is a $g \in G'$ such that $\tilde{B}_{x} \subset g(C_{x}-D_{0})$. It is then straightforward to check that the box $\tilde{B}_{x}$ is disjoint from its image by the  map $g \circ n \circ g^{-1} \in N$. 
The collection of boxes $\{\tilde{B}_{x}\}_{x\in \Omega}$ satisfies the condition of the statement.  
\end{proof}


Let $\cB$ be the finite cover given by Lemma \ref{l.good_cover} of the tilable lamination $\Omega$ by boxes and let $\rho>0$ be its Lebesgue number. Let us recall that  for any ball of radius $\rho$ in  $ \Omega$ there exists a box of $\cB$ containing this ball. Let $\cB_{1}$ be a box cover of cubic type of  $\Omega$,  equivalent to $\cB$, and such that any box has a diameter smaller than $\rho$. It follows that when two boxes  $B_{1}, B_{2}$ of $\cB_{1}$ are intersecting, there exists a box $B$ of $\cB$ containing  $B_{1}\cup B_{2}$. 


The following  is an algebraic lemma due to T. Tsuboi \cite{Ts2}.
\begin{lemma}[{\cite[Lemma 3.1]{Ts2}}]\label{lem:Ts2}
Let $B$ be a box  and  $n$ be an homeomorphism such that $n(B) \cap B =\emptyset$. Then for any  homeomorphisms $a,b \in G$ with supports in  $ B$, the commutator $[a,b]$ can be written as a product of 4 conjugates of $n$ and $n^{-1}$.
\end{lemma}
\begin{proof}
Let $h = n^{-1} a n$, since the supports are disjoint, we have $hb = bh$. So we get
\begin{align*}
aba^{-1}b^{-1} &= nh n^{-1}bn h^{-1} n^{-1}b^{-1}\\
& =  nh n^{-1} h^{-1} h b n h^{-1}b^{-1}bn^{-1} b^{-1}\\
 &= n (h n^{-1}h^{-1}) (b h n h^{-1} b^{-1}) (b n^{-1} b^{-1}).
\end{align*}  
\end{proof}

Now, for each $B\in \mathcal{B}_{1}$, let $G_B$ be the subgroup of $G$ of homeomorphisms with support in $B$, and let $H$ be the subgroup of $G$ generated by all the $G_B$ with $B\in\mathcal{B}_{1}$. By the partition property (item i)),  the groups $H$ and $G$ are the same. It is well-known that the commutator subgroup of $H$ is generated by the conjugates of commutators of elements in a generating set of $H$. So to prove the theorem, we just have to show for any boxes  $B_1, B_2$  in $\mathcal{B}_{1}$, and for   $f_1 \in G_{B_{1}}$ and  $f_2 \in G_{B_{2}}$, that the commutator $[f_1,f_2]$ belongs to $N$.

If the boxes $B_1$ and $B_2$ do not intersect, then every point of $\Omega$ is fixed by either $f_1$ or $f_2$, which means that $[f_1,f_2]=id$ and thus belong to $N$.

Suppose now that $B_1$ and $B_2$ intersect and  let  $B$ be a box in $\cB$ containing $B_1\cup B_2$. 
By Lemma \ref{l.good_cover}, there exists a $n \in N$ such that $B \cap n(B) =\emptyset$. Thus by Lemma \ref{lem:Ts2}, we have $[f_{1}, f_{2}] \in N$, and then $H'= N$. 
\end{proof}

\section{Perfectness}\label{sec:perfect}

To show that  the groups $\operatorname{Homeo}_{\cL}^{0}(\Omega)$ and 
$\operatorname{Homeo}_{s}^{0}(\Omega)$ are simple, we will first prove in this section they are perfect. For this, we need the next  lemma stating  a transitivity of the action of the group $D_{vsp}(\Omega)$ on specific boxes of a same  box. This is a reinforcement  of condition ii) in Theorem \ref{teo:simple}. We will deduce then the perfectness. This will imply, together with the partition property, that these groups satisfy the conditions of Theorem \ref{teo:simple}, and henceforth they are simple. 

\begin{lemma}\label{l.homeo_boxes}
Let $B_{1}=C-D_{1}$ and $B_{2}=C-D_{2}$ be two boxes of ball type, and let $B_{0} = C-V$ be a box containing the closures of $B_{1}$ and $B_{2}$.  Then there exists a $g \in D_{vsp}(\Omega)$  such that $B_{2}= g(B_{1})$.  
\end{lemma}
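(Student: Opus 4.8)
The plan is to realize $g$ as a homeomorphism supported in $B_0$ that acts by the \emph{same} homeomorphism of $\R^d$ on every slice of $B_0$. Working slicewise with a single map independent of the Cantor coordinate is exactly what forces the resulting homeomorphism to preserve the vertical structure and to lie in $D_{vsp}(\Omega)$, and this is available here precisely because all three boxes $B_0,B_1,B_2$ share the same vertical $C$.

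Concretely, I would fix a chart $h$ in which $B_0$ reads $h^{-1}(V\times C)$, so that $B_1=h^{-1}(D_1\times C)$ and $B_2=h^{-1}(D_2\times C)$, where $D_1,D_2$ are open balls whose closures lie in the open set $V\subset\R^d$. (We may assume $V$ is connected, replacing it by the connected component containing $\overline{D_1}\cup\overline{D_2}$; in the situations where the lemma is applied, $V$ is itself a ball.) The heart of the argument is then a purely Euclidean statement: there is an isotopy $(\phi_t)_{t\in[0,1]}$ of $\R^d$ with $\phi_0=\mathrm{Id}$, each $\phi_t$ equal to the identity outside a fixed compact set $K\subset V$, and $\phi_1(D_1)=D_2$.

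To produce this isotopy I would first choose a continuous family of open balls $(D_t)_{t\in[0,1]}$ with $\overline{D_t}\subset V$ for all $t$, interpolating from $D_1$ to $D_2$: shrink $D_1$ to a very small ball, slide its center along a path inside $V$ joining the centers of $D_1$ and $D_2$ (available since $V$ is open and connected), keeping the radius small enough that the ball stays inside $V$, and finally expand to $D_2$. An ambient isotopy realizing this family while fixing the complement of a compact subset of $V$ is then obtained either by invoking the isotopy extension theorem or by writing it down explicitly as a combination of compactly supported translations and radial rescalings; in either case it can be arranged to be the identity outside a compact $K\subset V$.

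Finally I would set $g(h^{-1}(\tau,c))=h^{-1}(\phi_1(\tau),c)$ on $B_0$ and extend $g$ by the identity on $\Omega\setminus B_0$; this is well defined and continuous because $\phi_1$ is the identity near $\partial V$, so $\supp g\subset h^{-1}(K\times C)\subset B_0$. Since $\phi_1$ does not depend on $c$, the displacement of $g$ is constant along verticals, hence transversally locally constant, so $g\in\Homeo_{vsp}(\Omega)$, exactly as in the $vsp$ case of Lemma \ref{lem:isot-close-id}. The family $g_t(h^{-1}(\tau,c))=h^{-1}(\phi_t(\tau),c)$, extended by the identity, is a continuous path in $\Homeo_{vsp}(\Omega)$ from $\mathrm{Id}$ to $g$, whence $g\in D_{vsp}(\Omega)$; and by construction $g(B_1)=h^{-1}(\phi_1(D_1)\times C)=h^{-1}(D_2\times C)=B_2$. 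The only genuinely nontrivial point is the Euclidean isotopy carrying one ball onto another inside $V$ with compact support; everything else is the slicewise bookkeeping transferring it to $\Omega$.
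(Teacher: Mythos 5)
Your proof is correct and rests on the same core idea as the paper's: pass to the chart of $B_0$, build a homeomorphism of the Euclidean factor carrying $D_1$ onto $D_2$ that is independent of the Cantor coordinate, and extend slicewise by the identity, which automatically yields membership in $D_{vsp}(\Omega)$. The only real divergence is in how the Euclidean step is handled. The paper first composes with a translation $T_\rho$ of the whole lamination (which lies in $D_{vsp}(\Omega)$ via the path $t\mapsto T_{t\rho}$) so as to reduce to the case of nested balls $D_1\subset D_2$ or $D_2\subset D_1$, and then uses a single radial rescaling supported in $C-V$; you instead avoid the global translation and produce a compactly supported ambient isotopy of $V$ moving $D_1$ to $D_2$ along a path of balls. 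Your variant buys the extra conclusion $\supp g\subset B_0$, but at the price of the connectedness assumption you slip in parenthetically: if $V$ is disconnected and $\overline{D_1}$, $\overline{D_2}$ lie in different components, no homeomorphism supported in $B_0$ can carry $B_1$ to $B_2$, whereas the paper's global translation handles that case (the lemma only asks for $g\in D_{vsp}(\Omega)$, not for support in $B_0$). Since in every application in the paper the ambient box has connected, indeed ball or cube, base, this is a harmless restriction, but it is worth stating explicitly rather than in passing.
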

\begin{proof} Let $h$ be the chart associated to the box $C-V$. The boxes $B_{1}$ and $B_{2}$ read respectively $h^{-1}(D_{1}\times C)$ and $h^{-1}(D_{2} \times C)$, with $D_{1}, D_{2}$ two balls in $V$. Up to composing  with a translation $T_{\rho}$, we may assume that $D_{1}\subset D_{2} \subset V$ or  $D_{2}\subset D_{1} \subset V$. In both cases, it is straightforward to construct a homeomorphism $\psi \in D_{vsp}(\Omega)$ with support in $C-V$ such that $\psi(h^{-1}(D_{1}) \times C) )= h^{-1}(D_{2}\times C)$. \end{proof}

The proof of the next theorem follows directly the ideas of Fr\'ed\'eric Le Roux (see Theorem 1.1.3 in  \cite{B} for a proof on a surface). On a manifold, this shows directly that the group of homeomorphism is simple. Here, because of the lack of homogeneity, it enables just to show the groups $\operatorname{Homeo}_{\cL}^0(\Omega)$  and $\operatorname{Homeo}_{vsp}^0(\Omega)$ are perfect. 

\begin{teo}\label{teo:perfectness}
 For $\Omega$ a  minimal $\R^d$-tilable lamination, the groups   $\operatorname{Homeo}_{\cL}^0(\Omega)$  and  $\operatorname{Homeo}_{vsp}^0(\Omega)$ are perfect. 
\end{teo}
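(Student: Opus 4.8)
The plan is to show that every element $f$ of $\operatorname{Homeo}^0_{\cL}(\Omega)$ (and likewise in the $vsp$-case) can be written as a product of commutators, following the standard ``infinite swindle'' strategy attributed to Le Roux. First I would invoke the partition property (Proposition \ref{prop:partition}) to reduce to the case where $\supp f$ is contained in a single box $B$ of ball type; since the commutator subgroup is normal and a product of elements supported in boxes is a product of their factors, it suffices to write each such factor as a commutator (or product of commutators). By Lemma \ref{lem:isot-close-id} any such factor is a deformation, so it lives in the connected component we are studying.

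The heart of the argument is to find, inside a slightly larger box $B_0 = C - V$ containing $\overline{B}$, an infinite sequence of pairwise disjoint sub-boxes $B^{(1)}, B^{(2)}, \ldots$ all of ball type, shrinking toward a point (or accumulating on the vertical boundary), together with homeomorphisms in $D_{vsp}(\Omega)$ moving a copy of $\supp f$ into each $B^{(k)}$. Concretely, using Lemma \ref{l.homeo_boxes} I can produce, for each $k$, a $g_k \in D_{vsp}(\Omega)$ carrying $B$ onto $B^{(k)}$; conjugating $f$ by $g_k$ gives a homeomorphism $f_k = g_k f g_k^{-1}$ supported in $B^{(k)}$, hence a homeomorphic ``copy'' of $f$. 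The disjointness of the $B^{(k)}$ means these $f_k$ all commute, so their infinite product $F = \prod_{k\ge 1} f_k$ is a well-defined homeomorphism (the supports shrink, guaranteeing continuity), and $F$ again lies in the connected component by the same box-localization argument.

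The swindle is completed by exhibiting a single homeomorphism $\Psi \in D_{vsp}(\Omega)$ that implements the shift $B^{(k)} \mapsto B^{(k+1)}$ on the disjoint union of boxes and is the identity elsewhere; such a $\Psi$ exists because the $B^{(k)}$ can be chosen as translates/rescalings within $B_0$ arranged so that a single homeomorphism cyclically advances them. Then $\Psi F \Psi^{-1} = \prod_{k\ge 2} f_k$, so that $f = f_1 = F \cdot (\Psi F \Psi^{-1})^{-1} = F (\Psi F^{-1} \Psi^{-1}) = [F, \Psi^{-1}]$ up to relabeling, exhibiting $f$ as a single commutator in the group. Taking the product over the finitely many box-factors from the partition property then writes the original $f$ as a product of commutators, which is exactly perfectness.

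The main obstacle I anticipate is the construction of the telescoping sequence of boxes and the shifting homeomorphism $\Psi$ within the laminated (rather than manifold) setting: one must ensure that both the $g_k$ and $\Psi$ genuinely lie in $D_{vsp}(\Omega)$, i.e.\ preserve the vertical structure and remain isotopic to the identity, and that the infinite product $F$ is continuous at the accumulation locus. This is where Lemma \ref{l.homeo_boxes} and the freedom to subdivide the clopen transversal $C$ are essential, since they let me realize the required moves fiberwise and uniformly, so that passing to the infinite product does not destroy continuity or leaf-preservation. Once the disjoint shrinking family and the shift map are in hand, the algebra of the swindle is routine.
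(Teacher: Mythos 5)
Your strategy is sound and does prove the theorem, but it runs the swindle differently from the paper. The shared skeleton is the same: localize $f$ to a box of ball type by the partition property (Proposition \ref{prop:partition}), use Lemma \ref{lem:isot-close-id} to see that such factors are deformations, build a telescoping family of sub-boxes accumulating on a vertical, and use Lemma \ref{l.homeo_boxes} to move boxes around. You then take the classical Anderson--Mather route: form the infinite commuting product $F=\prod_{k\ge 1}f_k$ of disjointly supported copies of $f$ and exhibit the localized piece as the single commutator $[F,\Psi]$. The paper instead follows Le Roux: it fixes one nontrivial commutator $g=[a,b]$ supported in the box, sets $k=[\psi,g]$ where $\psi$ shifts nested boxes $B_n$ onto $B_{n+1}$, and shows that any $\phi$ supported in the annulus $A_1=B_1\setminus B_2$ satisfies $\phi k=\tilde\phi\,k\,\tilde\phi^{-1}$ for a homeomorphism $\tilde\phi$ defined piecewise by $(\phi k)^n\phi k^{-n}$ on $A_n$; hence $\phi$ lies in the normal closure of the single commutator $g$, whose elements are products of commutators, and Lemma \ref{l.homeo_boxes} moves any support into $A_1$. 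The paper's version yields as a by-product that everything supported in a box lies in the normal closure of one nontrivial commutator (the same flavor of statement that drives the simplicity proof), while yours gives the cleaner output of one commutator per localized factor. Two points you should make explicit when writing this up: (i) $\Psi(B^{(k)})=B^{(k+1)}$ alone does not give $\Psi F\Psi^{-1}=\prod_{k\ge 2}f_k$; you need $\Psi f_k\Psi^{-1}=f_{k+1}$, which you should secure by \emph{defining} $f_{k}:=\Psi^{k-1}f_1\Psi^{-(k-1)}$ rather than choosing the conjugating maps $g_k$ independently of $\Psi$; and (ii) the accumulation locus of the $B^{(k)}$ is an entire vertical (a Cantor set), not a point, so you must check both the continuity of $F$ there (which holds since the displacements of the $f_k$ are bounded by the shrinking diameters of the balls) and, in the $vsp$ case, that $F$ still preserves the vertical structure at those points (it does, being the identity on that vertical).
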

\begin{proof}
Let $H$ denotes either  $\operatorname{Homeo}_{\cL}^0(\Omega)$  or $\operatorname{Homeo}_{vsp}^0(\Omega)$. We have to show that any element of $H$ can be written as a commutators product. It is simple to find two non commuting  elements $a, b \in D_{vsp}(\Omega)$ with supports in a box $B \subset \Omega$ of ball type. So the element $g =[a,b] \in H$ is not the identity. We will show that $N(g)$, the normal subgroup generated by $g$, contains all the elements $f$  of $H$ with support in the box $B$. Recall that a conjugate of a commutator is still a commutator, it will follow that $f$ can be written as a finite product of commutators. Since the box $B$ is arbitrary, we get the conclusion by  the partition property (Proposition \ref{prop:partition}).

We have  $g\neq Id$, so we  consider a box $B' \subset B$ such that $g(B')$ and $B'$ are disjoint.  In a chart $h$, we may assume that  $B'$  reads $h^{-1} (B_{r}(0) \times C)$ where $B_{r}(0)$ denotes the Euclidean ball in $\R^d$ of radius $r>0$ centered at $0$.  For any integer $n \ge 0$, we define a nested sequence of boxes  $B_{n}:= h^{-1}(B_{r/2^{n+1}}(0) \times C)$.   
It is simple to construct   an element $\psi$ of $D_{vsp}(\Omega)$ with support in $B'$, such that $\psi(B_{n}) = B_{n+1}$ of $n \ge 0$.  We get then that the homeomorphism $k= [\psi, g] \in N(g)$ satisfies $ k(B_{n}) = B_{n+1}$  for $n \ge 0$ and $\supp k \subset B' \cup g(B')$ ($k$ is the product of $\psi$ and $g\psi^{-1}g^{-1}$that have disjoint supports).

Let $A_{n} = B_{n} \setminus B_{n+1}$ and let us show that $N(g)$ contains all the element of $H$ with support in $A_{1}$.  For any $\phi \in H$ with a support in $A_{1}$,  we claim  that $\phi k$ and $k$ are conjugate:  notice, we have  $ k = \phi^{-1} (\phi k) \phi$  on $A_{0}$,  $\bigsqcup_{n \ge 0} A_{n} = \bigsqcup_{n\ge 0} g^{n}(A_{0}) = \bigsqcup_{n \ge 0} (\phi k)^{n} (A_{0})$ and $k_{\vert B \setminus B_{0}} = \phi k_{\vert B \setminus B_{0}}$; It is then standard to check that the continuous homeomorphism  $\tilde{\phi}$ defined by  
$$ \tilde{\phi}_{\vert A_{n}} := (\phi k)^{n} \phi k^{-n}_{\vert A_{n}}  \textrm{ for any }  n\ge 0 \hspace{0.5cm} \textrm{ and } \hspace{0.5cm} \tilde{\phi}_{\vert \Omega \setminus B_{0}} = Id,$$  can be extended by continuity to $\overline{\bigcup_{n \ge 0} A_{n}} = B_{0}$, is in $H$ and satisfies $k = \tilde{\phi}^{-1} (\phi k) \tilde{\phi}$  on  $\Omega$. 
We get then $\phi k \in N(k) \subset N(g)$, so $\phi \in N(g)$.

Finally, let $f \in H$ with a support in $B$. By Lemma \ref{l.homeo_boxes}, there exists a $\phi \in H$ such that the support of $\phi f \phi^{-1}$ is in $A_{1}$. So by the last result we have $f \in N(g)$.  
\end{proof}

We have then the groups $\Homeo_{\cL}^0(\Omega)$ and  $\Homeo_{vsp}^0(\Omega)$ equal their commutator groups. So by  Lemma \ref{l.homeo_boxes} and Theorem \ref{teo:simple}, we get  the main  result: Theorem \ref{teo:main}.

\section{Uniform perfectness in dimension one} 

Theorem \ref{teo:perfectness} asserts that any homeomorphism of a tilable lamination $\Omega$ is a product of commutators. For the  one dimension, we can  be more precise. 
\begin{teo}\label{teo:uniform perfectness} For $\Omega$ a  minimal $\R$-tilable lamination, any element of   $\Homeo_{\cL}^0(\Omega)$  (resp. $D_{vsp}(\Omega)$) can be written as a product of two commutators of   $ \Homeo_{\cL}^0(\Omega)$  (resp.  $D_{vsp}(\Omega)$).
\end{teo}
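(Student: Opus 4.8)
The plan is to combine a dimension-one sharpening of the fragmentation of Proposition~\ref{prop:presque-partition} with the fact, already contained in the proof of Theorem~\ref{teo:perfectness}, that a homeomorphism supported in a finite disjoint union of ball-type boxes is a \emph{single} commutator. Recall that to fragment a small-displacement $f$ one kills it inductively on neighborhoods of the $\ell$-skeleta, $0\le \ell\le d-1$; at each level the correcting maps have pairwise disjoint supports, hence commute, so their product is one homeomorphism supported in a disjoint union of boxes. In dimension one there is only the level $\ell=0$, and the construction yields $f=F_0\,f_0$, where $\supp F_0$ is a disjoint union of small boxes around the $0$-skeleton and $\supp f_0$ is the disjoint union of the open tiles of the decomposition. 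Thus, granting that each factor is one commutator, such an $f$ is a product of two commutators.

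For the factorisation of each piece I would run the argument of Theorem~\ref{teo:perfectness} simultaneously in all the boxes. Write $G$ (either $F_0$ or $f_0$) as $\prod_i G_i$ with $\supp G_i$ in a ball-type box $B_i$, the $B_i$ pairwise disjoint. In every $B_i$ fix a contraction $\psi_i$ onto the central vertical and non-commuting $a_i,b_i$ whose commutator displaces a subbox off itself, and assemble them into single homeomorphisms $\psi,a,b$ that are the identity off $\bigsqcup_i B_i$. Setting $k=[\psi,[a,b]]$ and pushing each $G_i$ into the first annulus by Lemma~\ref{l.homeo_boxes}, the telescoping conjugation of the proof of Theorem~\ref{teo:perfectness} produces, in each box, a map $\tilde\phi_i$ with $G_i=[\tilde\phi_i,k]$; because the supports are disjoint, $G=\prod_i[\tilde\phi_i,k]=[\prod_i\tilde\phi_i,\,k]$ is a single commutator of $H$. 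The infinite products defining the $\tilde\phi_i$ converge since they accumulate only on the central verticals, where every map is the identity. For the $D_{vsp}(\Omega)$ statement one first subdivides the clopen transversals so that all auxiliary maps are transversally locally constant, exactly as in Lemmas~\ref{lem:isot-close-id} and~\ref{lem:2_7}.

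The main obstacle is that the two-factor fragmentation above, resting on Proposition~\ref{prop:presque-partition}, is only available for maps of \emph{small displacement}, whereas uniform perfectness demands a uniform bound for \emph{every} element of $H$ --- for instance for the time-$c$ translations $T_c\in\Homeo^0_{\cL}(\Omega)$ with $c$ large. The trouble is genuinely the coarse, tile-shifting part of $f$: a homeomorphism supported near the $0$-skeleton cannot absorb a displacement that moves tiles by many fundamental domains, so the naive splitting fails. I expect this to be the heart of the proof and to rely on the feature that makes dimension one special, namely the total order along each leaf: one should reduce an arbitrary $f$ to a small-displacement map at the cost of a single, well-chosen commutator, using that leaf-preserving homeomorphisms need not preserve the transverse measure (so a commutator may carry a net translation along the leaves), and arrange that this correction is folded into the factor $F_0$ so that the total count stays at two. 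By the partition property (Proposition~\ref{prop:partition}) the final conclusion is independent of the chosen box decomposition, and once the reduction to small displacement is in place the first two paragraphs complete the argument.
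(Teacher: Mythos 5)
You correctly identify where the difficulty lies, but you do not resolve it, and that unresolved point is precisely the content of the paper's proof. Your first two paragraphs only treat homeomorphisms of small displacement (the skeleton fragmentation of Proposition~\ref{prop:presque-partition} is available only below the threshold $\varepsilon$ of that proposition), and your third paragraph concedes that an arbitrary $f\in\Homeo_{\cL}^0(\Omega)$ --- say a large translation $T_c$ --- cannot be reached this way, offering only the expectation that some ``single, well-chosen commutator'' reduces the general case to the small-displacement one. No such reduction is constructed, so the proposal proves uniform perfectness only on a neighborhood of the identity, which is strictly weaker than the statement.

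The paper's route avoids small displacement altogether and never invokes the skeleton fragmentation. The splitting $f=f_1\circ f_2$ is obtained from Lemma~\ref{lem:scindement}: choose $x$, a cubic box $B=C-I$ containing both $x$ and $f(x)$, and a small box $B'=C-J$ with $\overline{f(B')\cup B'}\subset B$; the one-dimensional interpolation Lemma~\ref{lem:2_7bis} (multiply the displacement by a bump function $\eta$ --- monotonicity of $t\mapsto t+\eta(t)\Phi(x-t)$ is what makes this work in dimension one, with no closeness-to-identity hypothesis) yields $f_2$ supported in $B$ with $f_2|_{B'}=f|_{B'}$, and $f_1=f\circ f_2^{-1}$ is the identity on $B'$. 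Then Lemma~\ref{lem:topology} decomposes $\Omega\setminus B'$ via first-return times into finitely many closed boxes $C_i-[0,\tau_i]$ with disjoint interiors, each preserved by $f_1$ since $f_1$ is orientation-preserving, leaf-preserving and fixes $B'$ pointwise. This replaces your $F_0\,f_0$ decomposition and works for every $f$. Your second paragraph (each factor, being supported in a disjoint union of boxes, is a single commutator) is essentially sound and parallels the paper's Lemma~\ref{lem:cuadrado}, which achieves the same end more directly via a $g$ with $gfg^{-1}=f^2$; but without the splitting above the argument does not close.
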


Before proving this theorem, we need some technical lemmas. 
The first one solves the problem of perfectness for homeomorphisms with support in a box.
Recall that any map $f \in \Homeo_{\cL}^{0}(\Omega)$ preserves the orientation, so, if $\Phi$ denotes its displacement, for any $\omega \in \Omega$, the map $\R \ni t \mapsto t+ \Phi(\omega-t) \in \R$ is increasing. 

\begin{lemma}\label{lem:cuadrado} Let $\Omega$ be a  minimal $\R$-tilable lamination and  let $f \in  \Homeo_{\cL}^0(\Omega)$  (resp.  $\Homeo_{vsp}^0(\Omega)$) with support included in a box $B$.   Then  there exists a homeomorphism $g \in \Homeo_{\cL}^0(\Omega)$  (resp.  $\Homeo_{vsp}^0(\Omega)$) with support in $B$ such that
$gfg^{-1} = f^{2}$.  In particular $f = [ g, f]$.
\end{lemma}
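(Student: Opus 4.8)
The plan is to reduce to a fibrewise problem on the slices of the box $B$, solve the classical problem of conjugating an interval homeomorphism to its square, and then worry about transverse continuity, which I expect to be the only real difficulty. Two observations keep the bookkeeping light. First, the identity $gfg^{-1}=f^{2}$ gives $[g,f]=gfg^{-1}f^{-1}=f^{2}f^{-1}=f$, so the last sentence of the statement is free once $g$ is built. Second, since $B$ may be taken of ball type and $g$ will be supported in $B$, Lemma~\ref{lem:isot-close-id} shows that any leaf-preserving (resp. vertical-structure-preserving) homeomorphism with support in $B$ automatically belongs to $\Homeo_{\cL}^{0}(\Omega)$ (resp. $D_{vsp}(\Omega)$); hence I need not check membership in the identity component separately. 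So the whole task is to construct a homeomorphism $g$ of $\Omega$ with $\supp g\subset B$ solving $gfg^{-1}=f^{2}$. Writing $B=h^{-1}(D\times C)$ with $D\subset\R$ an interval, the hypothesis $\supp f\subset B$ forces $f$ to preserve every slice, so for each $c\in C$ it induces an orientation-preserving homeomorphism $f_{c}\colon\overline{D}\to\overline{D}$ fixing $\partial D$, depending continuously on $c$ and, in the \emph{vsp} case, locally constant in $c$. It then suffices to produce homeomorphisms $g_{c}\colon\overline{D}\to\overline{D}$ fixing $\partial D$, varying continuously with $c$, such that $g_{c}f_{c}g_{c}^{-1}=f_{c}^{2}$.

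For the fibrewise step I fix $c$ and set $\phi=f_{c}$. As $\phi$ is increasing, $\phi(x)=x$ if and only if $\phi^{2}(x)=x$, so $\phi$ and $\phi^{2}$ have the same closed fixed-point set $F$, and on every component $(a,b)$ of $\overline{D}\setminus F$ both maps are fixed-point free and displace points in the same direction. On such an interval $\phi$ and $\phi^{2}$ are conjugate by the standard fundamental-domain construction: picking $x_{0}\in(a,b)$, any increasing homeomorphism from $[x_{0},\phi(x_{0})]$ onto $[x_{0},\phi^{2}(x_{0})]$ fixing $x_{0}$ extends, via the rule $g\circ\phi=\phi^{2}\circ g$, to a homeomorphism of $(a,b)$ that conjugates $\phi$ to $\phi^{2}$ and fixes $a,b$. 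Declaring $g=\mathrm{id}$ on $F$ yields a homeomorphism $g_{c}$ of $\overline{D}$ fixing $\partial D$ with $g_{c}\phi g_{c}^{-1}=\phi^{2}$.

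The hard part is to perform this construction continuously in $c$. In the \emph{vsp} case it is immediate: subdivide $C$ into clopen pieces on which $f_{c}$ is constant, run the above once on each piece, and glue across the clopen partition to obtain $g\in D_{vsp}(\Omega)$. In the general leaf-preserving case the obstacle is genuine, because the fixed-point set $F_{c}=\mathrm{Fix}(f_{c})$, and hence the family of complementary intervals supporting the conjugacy, can change topology as $c$ varies — fixed points may appear or disappear — while the fundamental-domain conjugacy is not close to the identity even on intervals where $f_{c}$ is. This is where I expect to spend the real effort. The plan is to make the data entering the fibrewise construction (the choice of $x_{0}(c)$ and of the interpolating homeomorphism) depend continuously on $f_{c}$, arranged so that a complementary interval which degenerates contributes a conjugacy degenerating to the identity: an interval whose length shrinks to $0$ supports a map tending uniformly to the identity and causes no trouble, while an interval being pinched by an emerging fixed point is handled by anchoring the conjugacy at that point, so that the split and unsplit constructions match in the limit. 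Assembling the $g_{c}$ then gives a homeomorphism $g$ of $\Omega$ with $\supp g\subset B$ and $gfg^{-1}=f^{2}$, whence $f=[g,f]$.
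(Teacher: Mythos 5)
Your reduction to a slice-wise problem, the fundamental-domain conjugacy between an interval homeomorphism and its square, the observation that $gfg^{-1}=f^{2}$ gives $[g,f]=f$ for free, the use of Lemma~\ref{lem:isot-close-id} to dispose of membership in the identity component, and the clopen-subdivision argument in the vertical-structure-preserving case all match the paper. But the one step you yourself flag as ``the real effort'' --- transverse continuity of the slice-wise conjugacies --- is exactly the step you do not carry out, and the plan you sketch for it does not cover the worst degeneration. For the record, the paper's device for the anchoring problem is the following: it decomposes $\supp f$ into countably many $f$-invariant open sets $\bigcup_{p\in\Z}f^{p}(V_{n})$, where $V_{n}=\{x-t:\ x\in C,\ t_{n}\le t<f_{x}(t_{n})\}$ is a strip anchored on a \emph{vertical} $C-t_{n}$; the anchor is thus the transversally constant coordinate $t_{n}$ (not a point $x_{0}(c)$ chosen separately in each slice), the interpolating map is the affine bijection $[t_{n},f_{x}(t_{n}))\to[t_{n},f_{x}^{2}(t_{n}))$ (hence canonical and continuous in the endpoint data), and $g$ is defined on $f^{p}(V_{n})$ as $f^{2p}\circ h_{n}\circ f^{-p}$. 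This is a concrete answer to the question you leave open of how to make the choice of base point and of interpolating homeomorphism depend continuously on the slice.

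The genuine gap is that your two degeneration scenarios (a complementary interval whose length shrinks to $0$; a fixed point emerging in the interior and splitting an interval) are not exhaustive, and the missing one is the hardest. Suppose $c_{k}\to c$ with $f_{c_{k}}(t)=t+\varepsilon_{k}\phi(t)$ on a complementary interval $(a,b)$ of \emph{fixed} length, $\phi>0$ on $(a,b)$ and $\varepsilon_{k}\to 0$, so that $f_{c}=\mathrm{id}$ on all of $[a,b]$ and every point of $(a,b)$ becomes fixed only in the limit. The interval neither shrinks nor gets split, yet the number of fundamental domains it contains tends to infinity, and the fundamental-domain conjugacy between $f_{c_{k}}$ and $f_{c_{k}}^{2}$ does \emph{not} tend to the identity: anchored at any point $x_{0}$, it converges (uniformly on compact subsets of $(a,b)$) to the ``time-doubling'' homeomorphism $\Psi_{s}(x_{0})\mapsto\Psi_{2s}(x_{0})$ of the flow $\Psi$ of $\phi\,\partial_{t}$, no matter where $x_{0}$ is placed. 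So ``anchoring at the emerging fixed point'' cannot rescue continuity here, and extending $g$ by the identity on the slice of $c$ is then inconsistent with the limit of the $g_{c_{k}}$. Any complete proof must either show how the conjugacies are to be defined on such limiting slices (they are forced to be a nontrivial map there, which must then be matched continuously against whatever $g$ does on neighbouring slices where $f$ is already the identity) or explain why this configuration can be avoided; your proposal offers no argument for either, so as it stands the construction of a \emph{continuous} $g$ on $\Omega$ in the general leaf-preserving case is not established.
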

\begin{proof}  Since $f$ preserves the orientation,  it preserves each slice of the box $B=C-I$, with $C$ a clopen set and $I$ an interval.  For any $x \in C$,  we denote by $f_{x} \colon I \to I$ the increasing map induced on the slice of $x$: i.e.  defined by $f_{x}(t)  = t+ \Phi(x-t)$ so that $f(x-t) = x- f_{x}(t)$ for any $t\in I$.

For any $z_{0} \in \{z\in B; f \neq Id \}$, let the vertical $C_{z_{0}}$ be $C- t_{0}$ where $z_{0}$ writes $x_{0}-t_{0}$ with $x_{0}\in C$, $t_{0}\in I$. We define the local strip 
$$V_{z_{0}}:= \{x-t; \ x\in C, t_{0} \le t < f_{x}(t_{0}) \}.$$ 
By the definition, the sets $\{ f^n(V_{z_{0}})= V_{f^n(z_{0})}\}_{n\in \Z}$ are pairwise disjoints, and $\cup_{n\in \Z}f^n(V_{z_{0}})$ is a $f$-invariant open  set. Hence there exist a  collection at most countable of  points $\{z_{n}\}_{n\ge 0}\subset \{f \neq Id \}$ and  local strips $V_{n} =V_{z_{n}}$ such that 
$$ \supp f = \cup_{n\ge 0} \overline{\cup_{p\in \Z}f^p(V_{n})} \textrm{ and the sets  } \{{{\cup_{p\in \Z}f^p(V_{n})}}\}_{n\ge 0} \textrm{ are pairwise disjoint}.$$
Notice that since $f$ preserves the orientation, a point is fixed by $f$ if and only it is a fixed point of $f^2$. So we have $\supp f= \supp f^2$.

For each $n\ge 0$, $z_{n } =x_{n}-t_{n}$ with $t_{n}\in I$, $x_{n}\in C$, let $h_{n} \colon [t_{n}, f_{z_{n}}(t_{n})) \to [t_{n}, f^2_{z_{n}}(t_{n}))$ be the bijective affine map fixing $t_{n}$. It is then straightforward to check that the continuous map $g_{n}$ defined on  $\cup_{p\in \Z}f^p(V_{n})$ by
$$g_{n \vert f^p(V_{n})} := f^{2p} \circ h_{n} \circ f^{-p},$$ 
can be continuously extended by the identity  to $\partial{\cup_{p\in \Z}f^p(V_{n})}$ and satisfies $f^2 \circ  g_{n} = g_{n} \circ f$ where it is defined. Hence we can define a homeomorphism $g$ on  $\Omega$ with support in $B$  such that $g_{ \vert {\cup_{p\in \Z}f^p(V_{n})}} = g_{n}$ for every $n \ge 0$.
Notice  furthermore that $g$ is in $\operatorname{Homeo}_{vsp}^0(\Omega)$ when  $f$ is. 
\end{proof}

The next lemma, is a version of lemma \ref{lem:2_7} without the condition to be close of the identity.
\begin{lemma}\label{lem:2_7bis}Let $\Omega$ be a minimal $\R$-tilable lamination and let $f \in   \operatorname{Homeo}_{\cL}^0(\Omega)$  (resp.  $\operatorname{Homeo}_{vsp}^0(\Omega)$).  Suppose that $B'=C-J$ and $B=C-I$ are boxes of cubic type such that the closure of $f(B')\cup B'$ is contained in $B$. Then, there exists a $g$ in  $D(\Omega)$  (resp.  $D_{vsp}(\Omega)$)  with support  contained in $B$ such that $f|_{B'}=g|_{B'}$. 
\end{lemma}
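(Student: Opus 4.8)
The idea is to construct $g$ slice by slice and then glue, using that in dimension one the required interpolation can be written down explicitly by monotone maps; this is what lets us drop the ``$f$ close to the identity'' hypothesis of Lemma~\ref{lem:2_7}, whose role there was only to feed the Edwards--Kirby theorem. Write $B=C-I$ with $I=(\alpha,\beta)$ and $B'=C-J$ with $J=(a,b)$ in a common chart, so that $\overline J\subset I$. For $c\in C$, let $f_{c}$ be the increasing homeomorphism induced by $f$ on the slice through $c$, determined by $f(c-t)=c-f_{c}(t)$; it is increasing since every element of $\Homeo^{0}_{\cL}(\Omega)$ preserves the orientation of the leaves. Because $\overline{f(B')\cup B'}$ is compact and contained in the open box $B$, there is a single $\varepsilon_{0}>0$ such that $\overline J$ and each $\overline{f_{c}(J)}$ are contained in $(\alpha+\varepsilon_{0},\beta-\varepsilon_{0})$.

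First I would fix $\alpha''=\alpha+\varepsilon_{0}/2$ and $\beta''=\beta-\varepsilon_{0}/2$ and define, for each $c$, an increasing homeomorphism $g_{c}\colon I\to I$ by setting $g_{c}=\mathrm{Id}$ on $(\alpha,\alpha'']$ and on $[\beta'',\beta)$, $g_{c}=f_{c}$ on $\overline J$, and letting $g_{c}$ be the affine bridge matching the prescribed endpoint values on each of $[\alpha'',a]$ and $[b,\beta'']$ (namely $\alpha''\mapsto\alpha''$, $a\mapsto f_{c}(a)$ on the left and $b\mapsto f_{c}(b)$, $\beta''\mapsto\beta''$ on the right). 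The choice of $\varepsilon_{0}$ makes every affine piece have positive slope, so $g_{c}$ is a genuine increasing homeomorphism of $I$; since the displacement $\Phi_{f}$ is continuous, the data $f_{c}(a)$, $f_{c}(b)$ and $f_{c}|_{\overline J}$ depend continuously on $c$, hence so does $g_{c}$.

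Then I would set $g(c-t):=c-g_{c}(t)$ and $g:=\mathrm{Id}$ off $B$. As $g_{c}$ is the identity near $\partial I$ uniformly in $c$, this defines a homeomorphism of $\Omega$ with $\supp g\subset C-[\alpha'',\beta'']\subset B$, it moves points only along leaves so $g\in\Homeo_{\cL}(\Omega)$, and $g|_{B'}=f|_{B'}$ because $g_{c}=f_{c}$ on $J$. In dimension one a box of cubic type is also of ball type, so Lemma~\ref{lem:isot-close-id} applies and gives $g\in D(\Omega)$. For the vsp case, $\Phi_{f}$ is transversally locally constant, so after subdividing $C$ into finitely many clopen pieces on which $f_{c}$ does not depend on $c$ the same construction produces a $g$ with the same property, and the vsp part of Lemma~\ref{lem:isot-close-id} yields $g\in D_{vsp}(\Omega)$.

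The only delicate point I anticipate is the joint continuity of $g$ in the transversal variable together with the \emph{uniform} buffer $\varepsilon_{0}$: one must verify that a single pair of affine formulas works for all $c\in C$ simultaneously, which is precisely what compactness of $\overline{f(B')\cup B'}\subset B$ supplies. The rest---monotonicity across the four junctions and bijectivity of the glued map---is routine one-dimensional bookkeeping.
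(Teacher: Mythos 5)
Your proof is correct, and it reaches the same goal as the paper --- replacing the Edwards--Kirby machinery of Lemma~\ref{lem:2_7} by an explicit one-dimensional monotone interpolation --- but the interpolation itself is genuinely different. The paper damps the displacement by a transversally constant bump function: it takes $\eta\colon I\to[0,1]$ equal to $1$ on $J$, to $0$ on $\partial I$, affine in between, and sets $g(x-t)=x-\eta(t)\Phi_f(x-t)$. Your construction instead keeps $f_c$ exactly on $\overline J$, freezes $g_c$ to the identity near $\partial I$, and joins the two by affine bridges determined only by the boundary values $f_c(a)$, $f_c(b)$. The difference matters: the paper's formula evaluates $\Phi_f$ on all of $B$, where the hypothesis $\overline{f(B')\cup B'}\subset B$ gives no control, and its monotonicity claim for $t\mapsto t+\eta(t)\Phi_f(x-t)$ is a convex combination with \emph{varying} weights, which requires an extra check (or an extra reduction) that the paper leaves implicit; your version gets monotonicity for free from the positive slopes of the affine pieces, and the uniform buffer $\varepsilon_0$ supplied by compactness of $\overline{f(B')\cup B'}$ is exactly what guarantees those slopes are positive for all $c$ simultaneously. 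Your appeal to Lemma~\ref{lem:isot-close-id} (a cubic box is a ball in dimension one) to land in $D(\Omega)$, and the clopen subdivision of $C$ for the vsp case, match the paper's intent. In short: same strategy, a more careful and self-contained interpolation.
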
 
\begin{proof}
Without loss of generality, we may assume that  $I,J$ are two open intervals such that $0\in \overline{J}\subset {I}$.  Consider $\eta:I\rightarrow[0,1]$ a continuous function that is equal to zero on the boundary of $I$, is equal to one on $J$ and affine on each component of $I\setminus J$. Let $\phi$ be the displacement function of $f$, and define $\psi \colon \Omega \to \R$ by 
 \[\psi (x-t) =  \eta(t)\phi(x-t)  \quad  \textrm{ for any }x\in C, t\in I,\]
and by zero on the complement of $B$. It is clear that $\psi$ is a continuous function. Thus, $g(x) := x - \psi(x)$ is continuous and coincides with $f$ on $B'$, and since $J\subset I$, it is also increasing by the choice of $\eta$. It is plain to check $g \in D(\Omega)$.
\end{proof}

\begin{lemma}\label{lem:scindement}
Let $\Omega$ ba a minimal $\R$-tilable lamination and let $f \in \Homeo_{\cL}^0(\Omega)$  (resp. $\Homeo_{vsp}^0(\Omega)$). Then there exist two boxes $B' \subset B$ and two homeomorphisms  $f_{1}, f_{2} \in \Homeo_{\cL}^0(\Omega)$ (resp. $\Homeo_{vsp}^0(\Omega)$) such that 
\begin{itemize}
\item $\operatorname{supp} f_{2} \subset B$;
\item $f_{1} \vert_{ B'} = Id \vert_{B'}$;
\item $f= f_{1} \circ f_{2}$.
\end{itemize}
\end{lemma}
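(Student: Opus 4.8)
The plan is to reduce everything to a single application of Lemma~\ref{lem:2_7bis}, which already provides a localization procedure that does \emph{not} require the displacement to be small. Write $f(\omega)=\omega-\Phi(\omega)$ with $\Phi$ the displacement of $f$, set $M=\|\Phi\|_{\infty}<\infty$, and recall that on a slice $f(x-t)=x-(t+\Phi(x-t))$, so $f$ moves points by at most $M$ \emph{along the leaf} and does not change the transversal coordinate. The idea is to cut $f$ along a transversal slice: I would produce a map $f_{2}$ supported in a box $B$ that coincides with $f$ on a small box $B_{0}'$, and then set $f_{1}:=f\circ f_{2}^{-1}$, which by construction equals the identity on $f(B_{0}')$.

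First I would choose the two nested boxes. For any box $B_{0}'=C-J$ with $J$ an open interval, the displacement bound gives $f(C-J)\subset C-J'$, where $J'$ is the open interval obtained by enlarging $J$ by $M$ on each side. Hence, picking an open interval $I$ with $\overline{J}\subset I$ and $\overline{J'}\subset I$ and setting $B:=C-I$, one gets $\overline{f(B_{0}')\cup B_{0}'}\subset B$ (using that $C$ is compact, so $\overline{C-J'}\subset C-\overline{J'}\subset B$), which is exactly the hypothesis of Lemma~\ref{lem:2_7bis}. The single constraint is that $B=C-I$ must be an internal box, i.e. the clopen transversal $C$ must have first-return time larger than $\abs{I}=\abs{J}+2M+\varepsilon$. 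I expect this to be the main (indeed the only genuine) obstacle, and it is exactly where one-dimensionality and minimality enter: one must know that $\Omega$ carries clopen transversals with arbitrarily large first-return time. This follows from minimality and freeness of the translation flow; fixing a point $\omega_{0}$ and a length $L$, the orbit segment $\{\omega_{0}-t:0\le t\le L\}$ is an embedded arc meeting the Cantor transversal in finitely many points, so a small enough clopen neighbourhood $C$ of $\omega_{0}$ in the transversal has first-return time $>L$. Taking $L>\abs{J}+2M$ produces the desired long box $B$.

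With these two boxes fixed (of cubic/interval type, as Lemma~\ref{lem:2_7bis} requires), I apply that lemma to obtain $f_{2}:=g\in D(\Omega)$ (resp. $D_{vsp}(\Omega)$) with $\supp f_{2}\subset B$ and $g|_{B_{0}'}=f|_{B_{0}'}$. Setting $f_{1}:=f\circ f_{2}^{-1}$ gives $f=f_{1}\circ f_{2}$ and $\supp f_{2}\subset B$ for free. To check the last property, take $y\in f(B_{0}')$ and write $y=f(z)$ with $z\in B_{0}'$; since $g=f$ on $B_{0}'$ we have $f_{2}^{-1}(y)=z$, whence $f_{1}(y)=f(z)=y$, so $f_{1}$ is the identity on the nonempty open set $f(B_{0}')\subset B$. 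Choosing any internal box $B'\subset f(B_{0}')$ then yields $B'\subset B$ with $f_{1}|_{B'}=Id$, as required.

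Finally, membership in the correct groups is immediate. By Proposition~\ref{prop:connexite}, $f_{2}=g\in D(\Omega)=\Homeo^{0}_{\cL}(\Omega)$ (resp. $g\in D_{vsp}(\Omega)=\Homeo^{0}_{vsp}(\Omega)$), and since $f\in\Homeo^{0}_{\cL}(\Omega)$ (resp. $\Homeo^{0}_{vsp}(\Omega)$) lies in a group, $f_{1}=f\circ f_{2}^{-1}$ lies there too. The vertical-structure-preserving case runs verbatim, using only that Lemma~\ref{lem:2_7bis} keeps $g$ in $D_{vsp}(\Omega)$.
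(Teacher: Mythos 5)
Your proposal is correct and follows essentially the same route as the paper: choose a small box $B_0'$ and a larger box $B$ containing $\overline{B_0'\cup f(B_0')}$, apply Lemma~\ref{lem:2_7bis} to get $f_2$, and set $f_1=f\circ f_2^{-1}$. The only differences are that you spell out the two details the paper leaves implicit, namely the existence (via large first-return times) of a box long enough to contain both $x$ and $f(x)$ with room to spare, and the fact that $f_1$ is the identity on $f(B_0')$ rather than on $B_0'$, so the box $B'$ of the statement must be taken inside $f(B_0')$.
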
 
\begin{proof}
Let $x$ be a point of $\Omega$. The points $x$ and $f(x)$ are in the same leaf, so they belong to a same box $B =C-I$ of cubic type. By continuity, there exists a small box $x\in B'=C-J$ such that  the closures of $B', f(B')$ are in $B$.  Let $f_{2}$ be the map given by Lemma \ref{lem:2_7bis}, and let $f_{1} = f \circ f_{2}^{-1}$.  It is straightforward to check they satisfy the conditions of the lemma.
\end{proof}

Next we need a topological lemma on one dimensional tilable laminations. If $B=C-(a,b)$ is a box of cubic type, for an element  $x\in C-b$, its \emph{return time to} $C-a$ is
$$\tau_{C-a}(x) = \inf \{t >0; \ x-t \in C-a \}.$$
By minimality, $\tau_{C-a}(x)$ is finite for any $x\in C-b$, and the map $\tau_{C-a} \colon C-b \to \R$ is locally constant, hence continuous. 

\begin{lemma}\label{lem:topology} Let $\Omega$ be a $\R$-tilable lamination, and let $B =C-(a,b)$ be a box of cubic type. Then the following map is an homeomorphsim.
 $$ \begin{array}{ccc} 
      \{(x,t); \ x \in C-b, \ 0\le t  \le \tau_{C-a}(x) \} &  \longrightarrow  &  \Omega \setminus B\\
      (x,t) & \mapsto & x-t.\\
   \end{array}$$
\end{lemma}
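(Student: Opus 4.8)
The plan is to show that the map $F(x,t)=x-t=T(t,x)$ is a \emph{continuous bijection from its domain onto $\Omega\setminus B$}, and then to invoke the classical principle that a continuous bijection between compact Hausdorff spaces is a homeomorphism. Two preliminary observations make this principle applicable. The domain
\[
\mathcal D=\{(x,t):\ x\in C-b,\ 0\le t\le \tau_{C-a}(x)\}
\]
is compact: indeed $C-b=T(b,C)$ is compact, the function $\tau_{C-a}$ is continuous (as already noted) and hence bounded by some $M$, and $\mathcal D$ is a closed subset of $(C-b)\times[0,M]$ cut out by the inequalities $t\ge 0$ and $t\le\tau_{C-a}(x)$. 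The target $\Omega\setminus B$ is compact, being closed in $\Omega$. Continuity of $F$ is immediate from continuity of the translation flow. It remains to prove that $F$ takes values in $\Omega\setminus B$ and is a bijection onto it.

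The heart of the argument is a description of how a leaf meets the two vertical faces $C-a$ and $C-b$ of $B$. Writing a point of $B$ as $c-t$ with $c\in C$ and $t\in(a,b)$, the local flow parameter \emph{increases} along the forward orbit $r\mapsto x-r$; consequently the forward orbit crosses $C-a$ only when \emph{entering} $B$ and crosses $C-b$ only when \emph{leaving} $B$ (and symmetrically for the backward orbit). I would use this to establish, for each $x\in C-b$, that the segment $\{x-t:\ 0\le t\le\tau_{C-a}(x)\}$ leaves $\overline B$ immediately after $t=0$, stays outside $\overline B$ on $(0,\tau_{C-a}(x))$, and re-enters $B$ exactly through $C-a$ at $t=\tau_{C-a}(x)$; in particular this segment meets $C-b$ only at $t=0$. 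This single fact yields both that $F(\mathcal D)\subset\Omega\setminus B$ and that $F$ is injective: if $x_1-t_1=x_2-t_2$ with $t_1\le t_2$, then $x_1=T(t_2-t_1,x_2)=x_2-(t_2-t_1)$ lies on the orbit segment of $x_2$ with parameter in $[0,\tau_{C-a}(x_2)]$ and also in $C-b$, which forces $t_2-t_1=0$ and hence $x_1=x_2$ (here one uses that $C-a$ and $C-b$ are disjoint because $B$ is internal).

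For surjectivity I would flow backwards. Given $y\in\Omega\setminus B$, minimality of the flow forces every backward semiorbit to be dense (its $\alpha$-limit set is a closed invariant set, hence all of $\Omega$), so the backward orbit of $y$ meets the open box $B$; since a backward orbit can enter $B$ only by crossing $C-b$, the first backward time $\sigma(y)=\inf\{s\ge 0:\ T(-s,y)\in C-b\}$ is finite and attained. Setting $x=T(-\sigma(y),y)\in C-b$ gives $x-\sigma(y)=y$. One then checks $\sigma(y)\le\tau_{C-a}(x)$ by tracking the forward orbit of $x$: it lies inside $B$ precisely for parameters in $(\tau_{C-a}(x),\,\tau_{C-a}(x)+(b-a))$ and first returns to $C-b$ at parameter $\tau_{C-a}(x)+(b-a)$. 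Since $y=x-\sigma(y)\notin B$ and the segment $(0,\sigma(y)]$ avoids $C-b$ by definition of $\sigma(y)$, the value $\sigma(y)$ can neither lie in the interval where the orbit is inside $B$ nor reach $\tau_{C-a}(x)+(b-a)$, so $\sigma(y)\le\tau_{C-a}(x)$. Thus $(x,\sigma(y))\in\mathcal D$ and $F(x,\sigma(y))=y$, proving surjectivity.

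The main obstacle is precisely this crossing analysis together with surjectivity: one must fix the orientation (monotonicity of the flow parameter along leaves) in order to decide through which face an orbit enters or leaves $B$, and then combine it with minimality to guarantee that the backward return to $C-b$ is finite and bounded by the forward return time $\tau_{C-a}$. Once these are in place, the remaining ingredients—compactness of $\mathcal D$ and of $\Omega\setminus B$, continuity of $F$, and the compact-to-Hausdorff homeomorphism principle—are routine.
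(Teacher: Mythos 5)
Your argument is correct, and it is in fact more than the paper provides: the authors simply declare ``The proof is plain'' and give no details, so your write-up supplies exactly the routine verification they had in mind (compactness of the domain via continuity of $\tau_{C-a}$, the orientation/crossing analysis showing the forward orbit of a point of $C-b$ avoids $B$ on $(0,\tau_{C-a}(x))$ and meets $C-b$ only at $t=0$, minimality for surjectivity, and the compact-to-Hausdorff principle). The only implicit point worth flagging is that minimality, though absent from the lemma's hypotheses, is indeed assumed in the surrounding text (it is what makes $\tau_{C-a}$ finite), so your use of it is consistent with the paper.
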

The proof is plain.

\begin{proof}[of Theorem \ref{teo:uniform perfectness}]  Let us denote  by $H$ the group $ \Homeo_{\cL}^0(\Omega)$  or $\Homeo_{vsp}^0(\Omega)$ and let $f \in H$. 
 Let $f_{1}$ and $f_{2}$ be the homeomorphims in $H$ and $B, B'$ be the boxes   given by Lemma \ref{lem:scindement}.  From Lemma \ref{lem:topology} applied to the box $B' = C-(a,b)$, and since the map $\tau_{C-a}$ is locally constant, there exists a clopen partition $\{C_{1}, \ldots, C_{\ell} \}$ of $C$ such that for any $i$,  $\tau_{C-a} \vert C_{i}$ is constant, equals to $\tau_{i}$ and $\{C_{i}-[0, \tau_{i}] \}_{i=1}^{\ell}$ is a covering of $\Omega \setminus B$ by closed boxes with interior pairwise disjoint.

Hence, the map $f_{1}$ preserves any box $C_{i} -[0, \tau_{i}] $, so it  can be written as a product of maps $g_{1}  \cdots  g_{\ell}$, where any $g_{i} \in H$ and $\operatorname{supp} g_{i} \subset C_{i}-[0, \tau_{i}]$. 
By Lemma \ref{lem:cuadrado},  $f_{2}$ is a commutator and any $g_{i}$ is a commutator  $[a_{i}, b_{i}]$ where the homeomorphisms $a_{i}, b_{i}\in H$ have their support in the box $C_{i}-[0, \tau_{i}]$. Since two homeomorphisms  with  disjoint interior of supports   commute, we have 
\begin{align*}
f_{1 }=\prod_{i=1}^{\ell} g_{i} = \prod_{i=1}^{\ell} [a_{i}, b_{i}] = [\prod_{i=1}^{\ell} a_{i}, \prod_{i=1}^{\ell} b_{i} ].
\end{align*}
It follows that $f$ may be written as a product of two commutators.
\end{proof}

\textbf{Acknowledgments.}   It is a pleasure for S. Petite to acknowledge A. Rivi\`ere for all the discussions on the subtleties of the Schoenflies Theorem.

\address{
   Jos\'e Aliste-Prieto\\
   Departamento de Matematicas\\
   Universidad Andres Bello\\
   Republica 220, Santiago, Chile
   \email{jose.aliste@unab.cl}}

\address{
   Samuel Petite\\
  Laboratoire Ami\'enois de\\ Math\'ematique Fondamentale et Appliqu\'ee,\\ CNRS-UMR 7352,\\
Universit\'{e} de Picardie Jules Verne,\\
33 rue Saint Leu,\\
80039
Amiens Cedex, France.
\email{samuel.petite@u-picardie.fr}}
\end{document}